\definecolor{darkmode}{RGB}{32, 31, 30}
\definecolor{gblue}{RGB}{190,209,210}
\definecolor{oblue}{RGB}{114, 160, 193}
\definecolor{dartmouthgreen}{rgb}{0.05, 0.5, 0.06}
\def\VR{\kern-\arraycolsep\strut\vrule &\kern-\arraycolsep}
\def\vr{\kern-\arraycolsep & \kern-\arraycolsep}
\newtheorem{theorem}{Theorem}[section]
\newtheorem{lemma}[theorem]{Lemma}
\newtheorem{prop}[theorem]{Proposition}
\newtheorem{corollary}[theorem]{Corollary}
\newtheorem{problem}{Problem}
\theoremstyle{definition}
\newtheorem{definition}[theorem]{Definition}
\newtheorem{rmk}[theorem]{Remark}
\newenvironment{remark}[1][]{\begin{rmk}[#1]\pushQED{\qed}}{\popQED \end{rmk}}
\newtheorem{qu}[theorem]{Question}
\newtheorem*{rmknonum}{Remark}
\newtheorem{obs}[theorem]{Observation}
\newtheorem{ex}[theorem]{Example}
\newenvironment{example}[1][]{\begin{ex}[#1]\pushQED{\qed}}{\popQED \end{ex}}
\newcommand{\pctext}[2]{\text{\parbox{#1}{\centering #2}}}
\newcommand{\Hom}{\operatorname{Hom}}
\newcommand{\End}{\operatorname{End}}
\newcommand{\rep}{\operatorname{rep}}
\newcommand{\SI}{\operatorname{SI}}
\newcommand{\SL}{\operatorname{SL}}
\newcommand{\GL}{\operatorname{GL}}
\newcommand{\ZZ}{\mathbb Z}
\newcommand{\CC}{\mathbb C}
\newcommand{\RR}{\mathbb R}
\newcommand{\NN}{\mathbb N}
\newcommand{\I}{\mathcal I }
\newcommand{\eps}{\epsilon} 
\newcommand{\be}{\begin{enumerate}} 
\newcommand{\ee}{\end{enumerate}}
\newcommand{\dsp}{\displaystyle}
\newcommand{\hb}{\widehat{\beta}}
\newcommand{\V}{V}
\newcommand{\Id}{\mathbf{Id}}
\newcommand{\ddim}{\operatorname{\mathbf{dim}}}
\newcommand{\Q}{\mathbf{\mathcal{Q}}}
\newcommand{\E}{\epsilon}
\newcommand{\p}{\mathcal{P}}
\newcommand{\F}{\mathcal{F}}
\newcommand{\K}{\mathcal{K}}
\newcommand{\unlm}{\underline{\lambda}}
\newcommand{\lm}{\lambda}
\newcommand{\river}{\mathcal{T}}
\newcommand{\wsi}[1]{\widehat{\sigma}({#1})}
\newcommand{\Eff}{\operatorname{Eff}}
\DeclareMathOperator{\Herm}{\operatorname{Herm}}
\newcommand\restr[2]{{% we make the whole thing an ordinary symbol
  \left.\kern-\nulldelimiterspace % automatically resize the bar with \right
  #1 % the function
  \vphantom{\big|} % pretend it's a little taller at normal size
  \right|_{#2} % this is the delimiter
  }}
\tikzset{snake it/.style={decorate, decoration=snake}}
\tikzset{
B/.style = {decorate,
            decoration={calligraphic brace, amplitude=8pt,
            raise=15pt, mirror},% for mirroring of brace
            very thick,
            pen colour=black},
dot/.style = {circle, fill, inner sep=2pt, outer sep=0pt}
        }
\tikzset{decorate sep/.style 2 args=
{decorate,decoration={shape backgrounds,shape=circle,shape size=#1,shape sep=#2}}}
\begin{document}

\title{Hive-type polytopes for quiver multiplicities and the membership problem for quiver moment cones}
\author{Calin Chindris}
\address{University of Missouri-Columbia, Mathematics Department, Columbia, MO, USA}
\email[Calin Chindris]{chindrisc@missouri.edu}

\author{Brett Collins}
\address{University of Georgia, Mathematics Department, Athens, GA, USA}
\email[Brett Collins]{brett.collins@uga.edu}

\author{Daniel Kline}
\address{College of the Ozarks, Mathematics Department, Point Lookout, MO, USA}
\email[Daniel Kline]{dkline@cofo.edu}

\date{\today}
\bibliographystyle{amsalpha}
\subjclass[2010]{16G20, 13A50, 14L24}
\keywords{Hive polytopes, Littlewood-Richardson coefficients, moment cones, strongly polynomial time, quiver exceptional sequences, quiver semi-invariants}

\begin{abstract} Let $\Q$ be a bipartite quiver with vertex set $\Q_0$ such that the number of arrows between any source vertex and any sink vertex is constant. Let $\beta=(\beta(x))_{x \in \Q_0}$ be a dimension vector of $\Q$ with positive integer coordinates. 

Let $\rep(\Q, \beta)$ be the representation space of $\beta$-dimensional representations of $\Q$ and $\GL(\beta)$ the base change group acting on $\rep(Q, \beta)$ be simultaneous conjugation. Let $K^{\beta}_{\unlm}$ be the multiplicity of the irreducible representation of $\GL(\beta)$ of highest weight $\unlm$ in the ring of polynomial functions on $\rep(\Q, \beta)$. 

We show that $K^{\beta}_{\unlm}$ can be expressed as the number of lattice points of a polytope obtained by gluing together two Knutson-Tao hive polytopes. Furthermore, this polytopal description together with Derksen-Weyman's Saturation Theorem for quiver semi-invariants allows us to use Tardos' algorithm to solve the membership problem for the moment cone associated to $(\Q,\beta)$ in strongly polynomial time.  
\end{abstract}

\maketitle
\setcounter{tocdepth}{1}
\tableofcontents

\section{Introduction} 
\label{intro-sec}
\subsection{Motivation}
The Littlewood-Richardson coefficients are fundamental structure constants in algebraic combinatorics, representation theory and other areas in mathematics, mathematical physics, and algebraic complexity theory. In \cite{KT}, Knutson and Tao found a beautiful polytopal description of the Littlewood-Richardson coefficients in terms of certain triangular arrays of numbers, known as hives (see also the exposition by Buch \cite{Buch-2000}). This description plays a crucial role in the (first) proof of the Saturation Conjecture of the Littlewood-Richardson coefficients. Furthermore, Mulmuley, Naranayan, and Sohoni \cite{MNS} used the Knutson-Tao hive model and the Saturation Property of the Littlewood-Richardson coefficients to test their positivity in strongly polynomial time.

In this paper we aim to find similar polytopal descriptions for the more general multiplicities $K^{\beta}_{\unlm}$ and provide applications to the membership problem for moment cones of quivers.

Let $Q$ be a general quiver with set of vertices $Q_0$ and set of arrows $Q_1$. For an arrow $a \in Q_1$, we denote its tail and head by $ta$ and $ha$, respectively. Let $\beta=(\beta(x))_{x \in Q_0} \in \ZZ_{>0}^{Q_0}$ be a sincere dimension vector of $Q$ and let us consider the representation space of $\beta$-dimensional representations of $Q$,
$$
\rep(Q, \beta):=\prod_{a \in Q_1} \CC^{\beta(ha) \times \beta(ta)}.
$$ 
The base change group $\GL(\beta):=\prod_{x \in Q_0} \GL(\beta(x))$ acts on $\rep(Q, \beta)$ by simultaneous conjugation. This action gives rise to a rational convex polyhedral cone (see \cite{Sj}), which we refer to as the moment cone associated to $(Q, \beta)$ (see also \cite{CC4}). It is defined as follows: 
\[
\Delta(Q, \beta):= \left \{(\lm(x))_{x \in Q_0} \left \vert \pctext{4.4in}{$\lm(x)$ is a weakly decreasing sequence of $\beta(x)$ real numbers \\ such that there exists $W \in \rep(Q, \beta)$ with $\lm(x)$ the spectrum  of $\sum_{\substack{\text{$a \in Q_1$}\\ \text{ $ta = x$}} } W(a)^* \cdot W(a) - \sum_{\substack{\text{$a \in Q_1$} \\ \text{$ha = x$}}} W(a) \cdot W(a)^* $ for all $x \in Q_0$} \right. \right \},
\] 
where $W(a)^* \in \CC^{\beta(ta)\times \beta(ha)}$ denotes the transpose of the conjugate of $W(a)$ for every $a \in Q_1$.

For example, consider $\Q=\bullet \rightarrow \bullet \leftarrow \bullet$ and $\beta =(r,r,r)$. In this case, the multiplicities $K^{\beta}_{\unlm}$ are the Littlewood-Richardson coefficients corresponding to triples of partitions of length at most $r$, and the moment cone $\Delta(\Q, \beta)$ is essentially the Klyachko cone (see Example \ref{Kly-cone-ex} for more details). 

\subsection{Our results}
In this paper, we focus our attention on bipartite quivers $\Q$ with $m$ source vertices, $l$ sink vertices, and $n$ arrows between any two source and sink vertices. We refer to such quivers as $n$-complete bipartite quivers. 
\begingroup\makeatletter\def\f@size{9.5}\check@mathfonts
$$
\Q:~
\vcenter{\hbox{  
\begin{tikzpicture}[[every edge quotes/.style={fill=white,font=\footnotesize}, point/.style={shape=circle, fill=black, scale=.3pt,outer sep=3pt},>=latex]
   \node[point,label={left:$x_1$}] (1) at (-4,1.75) {};
   \node[point,label={left:$x_2$}] (2) at (-4,.5) {};
   \node[point,label={left:$x_m$}] (3) at (-4,-1.75) {};
   
   \node[point,label={right:$y_1$}] (-1) at (0,1.75) {};
   \node[point,label={right:$y_2$}] (-2) at (0,.5) {};
   \node[point,label={right:$y_\ell$}] (-3) at (0,-1.75) {};
 \draw[decorate sep={.25mm}{3mm},fill] (0,.25) -- (0,-1.5);
  \draw[decorate sep={.25mm}{3mm},fill] (-4,.25) -- (-4,-1.5);
  
 \begin{scope}[every node/.style={fill=white}]
    \path[->]  
 (1) edge node{($n$ arrows)}	 (-1)
  (1) edge  [bend left=10] (-2)
  (1) edge  [bend right=10] (-3)
  (2) edge  (-1)
  (2) edge (-2)
  (2) edge [bend right=5]  (-3)
  (3) edge  (-1)
  (3) edge (-2)
  (3) edge  (-3);
  \end{scope}
\end{tikzpicture} 
}}
$$
\endgroup

Let $\unlm=(\lm(x_i), -\lm(y_j))_{i \in [m], j \in [\ell] }$ be a tuple of sequences with $\lm(x_i)$ a partition of length at most $\beta(x_i)$ and $\lm(y_j)$ a partition of length at most $\beta(y_j)$. Here, for $\lambda=(\lambda_1, \ldots, \lambda_N)$ a weakly decreasing sequence, $-\lambda$ denotes the weakly decreasing sequence $(-\lambda_N, \ldots, -\lambda_1)$. 

Let $K^{\beta}_{\unlm}$ be the multiplicity of the irreducible representation of $\GL(\beta)$ of highest weight $\unlm$ in $\CC[\rep(\Q,\beta)]$, the ring of polynomial functions on $\rep(\Q, \beta)$. We point out that the multiplicities $K^{\beta}_{\unlm}$ can also be expressed as dimensions\footnote{Any multiplicity $K^{\beta}_{\unlm}$ can be expressed as $\dim \SI(\Q_\beta, \widetilde{\beta})_{\widetilde{\sigma}}$ for a suitable weight $\widetilde{\sigma}$. This step is essential to our analysis because it allows us to use powerful methods from quiver invariant theory to derive the formula $(\ref{main-formula-mult})$. However, despite these advantages,  the explicit polytope we obtain in the first part of Theorem \ref{main-thm} cannot be constructed in strongly polynomial time when given a weight $\widetilde{\sigma}$ as input.  This limitation is the main obstacle to concluding that the positivity of $\dim \SI(\Q_\beta, \widetilde{\beta})_{\widetilde{\sigma}}$ can be decided in strongly polynomial time.  This problem, which we refer to as the generic quiver semi-stability problem, is still wide open.} of weight spaces of semi-invariants on the representation space $\rep(\Q_\beta, \widetilde{\beta})$, where $(\Q_\beta, \widetilde{\beta})$ is the flag-extension of $(\Q, \beta)$; see diagram $(\ref{flag-extension-pic})$ for details on how to draw $(\Q_\beta, \widetilde{\beta})$.  

Our main goal is to provide an explicit, polytopal description of the multiplicities $K^{\beta}_{\unlm}$. This description combined with Derksen-Weyman's Saturation Theorem (see \cite{DW1}) allows us to use Tardos' strongly polynomial time algorithm (see \cite{Tar86}) in our context. 

\begin{theorem}\label{main-thm} Let $\Q$ be an $n$-complete bipartite quiver with source vertices $x_1, \ldots, x_m$ and sink vertices $y_1, \ldots, y_\ell$ and let $\beta=(\beta(x))_{x \in \Q_0}$ be a sincere dimension vector of $\Q$. 

Let $\unlm=(\lm(x_i), -\lm(y_j))_{i \in [m], j \in [\ell] }$ be a tuple of sequences with $\lm(x_i)$ a partition of length at most $\beta(x_i)$ and $\lm(y_j)$ a partition of length at most $\beta(y_j)$ such that
$$
\sum_{i=1}^m |\lambda(x_i)|=\sum_{j=1}^{\ell} |\lambda(y_j)|.
$$

\begin{enumerate}[\normalfont(1)]
\item The multiplicity $K^{\beta}_{\unlm}$ can be expressed as the number of lattice points of a polytope $\p_{\unlm}$ obtained by gluing together two Knutson-Tao hive polytopes. 

\item There exists a strongly polynomial time algorithm to decide if $K^{\beta}_{\unlm}>0$. In particular, checking membership in the moment cone $\Delta(\Q, \beta)$ can be accomplished in strongly polynomial time.
\end{enumerate} 
\end{theorem}

To prove the first part of Theorem \ref{main-thm}, we establish in Theorem \ref{main-formula-mult-thm} a formula that expresses the multiplicity $K^{\beta}_{\unlm}$ as a sum of products of two multiple Littlewood-Richardson coefficients. This is achieved by first viewing $K^{\beta}_{\unlm}$ as the dimension of a weight space of semi-invariants for $\Q_\beta$ and then using quiver exceptional sequences and Derksen-Weyman's Embedding Theorem to embed $\Q_\beta$ into another quiver $\river$, introduced in Section \ref{qes-embedding-sec}. It is this new quiver $\river$ and its weight spaces of semi-invariants that enable us to derive the desired formula for $K^{\beta}_{\unlm}$ (see also Remark \ref{rmk-direct-compute-vs-river-quiver}). This formula leads us to the polytope $\p_{\unlm}$ that can be described as a combinatorial linear program and, furthermore, the positivity of $K^{\beta}_{\unlm}$ is equivalent to the feasibility of the corresponding combinatorial linear program (see Proposition \ref{prop-main-thm-part-1}). In Section \ref{moment-cones-proof-thm-1-sec}, we first show that a tuple $\underline{\lambda}$ of weakly decreasing sequences of integers lies in $\Delta(\Q, \beta)$ if and only if $K^{\beta}_{\unlm}$ is positive. Thus, checking membership in $\Delta(\Q, \beta)$ is equivalent to checking the feasibility of a combinatorial linear program that can be checked in strongly polynomial time via Tardos' algorithm.

In a recent paper \cite{Ver-Wal-2023}, Vergne and Walter generalized our Theorem \ref{main-thm} by proving the existence of polytopes that are less explicit than ours but they work for arbitrary acyclic quivers; see Remark \ref{rmk-abstract-vs-explicit-polytopes} for more details. This, combined with Tardos' algorithm, allowed them to conclude that the membership problem for moment cones for general acyclic quivers can be solved in strongly polynomial time. 

\section{Background on Quiver Invariant Theory}\label{QIT-semi-invariants-sec}

\subsection{Quivers and their representations} Throughout, we work over the field $\CC$ of complex numbers and denote by $\NN=\{0,1,\dots \}$. For a positive integer $L$, we denote by $[L]=\{1, \ldots, L\}$.

A quiver $Q=(Q_0,Q_1,t,h)$ consists of two finite sets $Q_0$ (vertices) and $Q_1$ (arrows) together with two maps $t:Q_1 \to Q_0$ (tail) and $h:Q_1 \to Q_0$ (head). We represent $Q$ as a directed graph with set of vertices $Q_0$ and directed edges $a:ta \to ha$ for every $a \in Q_1$.  A quiver is said to be acyclic if it has no oriented cycles. We call a quiver connected if its underlying graph is connected.

A representation of $Q$ is a family $V=(V(x), V(a))_{x \in Q_0, a\in Q_1}$, where $V(x)$ is a finite-dimensional $\CC$-vector space for every $x \in Q_0$, and $V(a): V(ta) \to V(ha)$ is a $\CC$-linear map for every $a \in Q_1$. After fixing bases for the vector spaces $\V(x)$, $x \in Q_0$, we often think of the linear maps $\V(a)$, $a \in Q_1$, as matrices of appropriate size. A subrepresentation $W$ of $V$, written as $W \subseteq V$, is a representation of $Q$ such that $W(x) \subseteq V(x)$  for every $x \in Q_0$, and moreover $V(a)(W(ta)) \subseteq W(ha)$ and $W(a)=\restr{V(a)}{W(ta)}$ for every arrow $a \in Q_1$. 

A morphism $\varphi:V \rightarrow W$ between two representations is a collection $(\varphi(x))_{x \in Q_0}$ of $\CC$-linear maps with $\varphi(x) \in \Hom_{\CC}(V(x), W(x))$ for every $x \in Q_0$, and such that $\varphi(ha) \circ V(a)=W(a) \circ \varphi(ta)$ for every $a \in Q_1$. The $\CC$-vector space of all morphisms from $V$ to $W$ is denoted by $\Hom_Q(V, W)$.

The dimension vector $\ddim V \in \NN^{Q_0}$ of a representation $V$  is defined by $\ddim V(x)=\dim_\CC V(x)$ for all $x \in Q_0$. By a dimension vector of $Q$, we simply mean an $\NN$-valued function on the set of vertices $Q_0$. We say a dimension vector $\beta$ is sincere if $\beta(x)>0$ for every $x \in Q_0$. For every vertex $x \in Q_0$, the simple dimension vector at $x$, denoted by $e_x$, is defined by $e_x(y)=\delta_{x,y}$, $\forall y \in Q_0$, where $\delta_{x,y}$ is the Kronecker symbol. We point out that $e_x$ is the dimension vector of the simple representation $S_x$ defined by assigning a copy of $\CC$ to vertex $x$, the zero vector space at all other vertices, and the zero linear map along all arrows. 

The Euler form (also known as the Ringel form) of $Q$ is the bilinear form on $\ZZ^{Q_0}$ defined by
$$
\langle \alpha, \beta \rangle:=\sum_{x \in Q_0}\alpha(x)\beta(x)-\sum_{a \in Q_1} \alpha(ta)\beta(ha), \; \forall \alpha, \beta \in \ZZ^{Q_0}.
$$

From now on, we assume that all of our quivers are connected and acyclic. Then, for any integral weight $\sigma \in \ZZ^{Q_0}$, there exists a unique $\alpha \in \ZZ^{Q_0}$ such that $\sigma(x)=\langle \alpha, e_x \rangle$, $\forall x \in Q_0$. 

\subsection{Weight spaces of semi-invariants and quiver semi-stability}

Let $\beta$ be a sincere dimension vector of a quiver $Q$. As mentioned in Section \ref{intro-sec}, there is a natural action via simultaneous conjugation of $\GL(\beta)$ on $\rep(Q,\beta)$, \emph{i.e.}, for $g=(g(x))_{x \in Q_0} \in \GL(\beta)$ and $W=(W(a))_{a \in Q_1} \in \rep(Q,\beta)$, we define $g \cdot W \in \rep(Q, \beta)$ by
\[
(g \cdot W)(a):=g(ha)\cdot W(a) \cdot g(ta)^{-1}, \; \forall a \in Q_1.
\]

\noindent
This action descends to that of the subgroup
\[
\SL(\beta) := \prod_{x \in Q_0} \SL(\beta(x)),
\]
giving rise to a highly non-trivial ring of semi-invariants $\SI(Q,\beta):= \CC[\rep(Q,\beta)]^{\SL(\beta)}$. (We point out that since $Q$ is assumed to be acyclic, the invariant ring $\CC[\rep(Q, \beta)]^{\GL(\beta)}$ is precisely $\CC$.) Since $\GL(\beta)$ is linearly reductive and $\SL(\beta)$ is its commutator subgroup, we have the weight space decomposition 
\[
\SI(Q,\beta) = \bigoplus_{\chi \in X^*(\GL(\beta))} \SI(Q,\beta)_{\chi},
\]
where $X^*(\GL(\beta))$ is the group of rational characters of $\GL(\beta)$ and 
\[
\SI(Q,\beta)_{\chi}:= \{f \in \CC[\rep(Q,\beta)] \mid g \cdot f = \chi(g)f, \, \forall g \in \GL(\beta)\}
\]
is the space of \emph{semi-invariants of weight $\chi$}. Every integral weight $\sigma \in \ZZ^{Q_0}$ defines a character $\chi_{\sigma}$ of $\GL(\beta)$  by $\chi_{\sigma}(g):=\prod_{x \in Q_0} (\det g(x))^{\sigma(x)}$, $\forall g=(g(x))_{x \in Q_0} \in \GL(\beta)$. Moreover, since $\beta$ is sincere, any character of $\GL(\beta)$ is of the form $\chi_{\sigma}$ for a unique $\sigma \in \ZZ^{Q_0}$, allowing us to identify the character group with $\ZZ^{Q_0}$.  In what follows, we write $\SI(Q, \beta)_{\sigma}$ for $\SI(Q, \beta)_{\chi_\sigma}$.

In \cite{K}, King  used weight spaces of semi-invariants and tools from Geometric Invariant Theory to construct moduli spaces of quiver representations. Our focus in this paper is on combinatorial/computational aspects of weight spaces of semi-invariants.  

\begin{problem}[\textbf{The Polytopal Problem for quiver semi-invariants}]\label{poly-prob-quiver-semi-inv}
Let $Q$ be a quiver, $\beta$ a sincere dimension vector of $Q$, and $\sigma$ an integral weight of $Q$ such that $\sigma \cdot \beta=0$. Find an explicit rational polytope $\p_{\sigma}$ such that
\begin{enumerate}[\normalfont(1)]
\item $\dim \SI(Q, \beta)_\sigma=\text{the number of lattice points of~} \p_{\sigma}$;

\item $\p_{\sigma}$ can be described by a combinatorial linear program $A\mathbf{x} \leq \mathbf{b}$, where $A$ does not depend on $\sigma$, and the coordinates of $\mathbf{b}$ are homogeneous linear forms in the coordinates of $\sigma$. (This latter condition implies that $r\p_\sigma=\p_{r \sigma}$ for any positive integer $r$.)
\end{enumerate}
\end{problem}
 
\noindent
The polytopal problem for quiver semi-invariants, where the emphasis is on explicit, combinatorial polytopes,  seems to be very difficult in general. There are only a few explicit examples of quivers in the literature where Problem \ref{poly-prob-quiver-semi-inv} has been solved; see \cite{CDW}, \cite{CC1}, \cite{CC2}, \cite{Col20}, and \cite{DW1}. All of these examples rely on Knutson-Tao's hive model for Littlewood-Richardson coefficients. In this paper, we solve Problem \ref{poly-prob-quiver-semi-inv} for $n$-complete bipartite quivers and their flag-extensions by using quiver exceptional sequences to embed these quivers into other quivers and then computing the dimensions of the weight spaces  of semi-invariants for those quivers (see the quiver $\river$ defined in Section \ref{qes-embedding-sec}). Directly computing dimensions of weight spaces of semi-invariants for these quivers without embedding leads to very complicated formulas (see Remark \ref{rmk-direct-compute-vs-river-quiver}).

\begin{rmk}\label{rmk-abstract-vs-explicit-polytopes}
As a ``\emph{straightforward variant of their [our] construction}", Vergne and Walter introduced in \cite{Ver-Wal-2023} polytopes whose numbers of lattice points are the dimensions of weight spaces of semi-invariants for general acyclic quivers. While it seems difficult to find explicit, geometric descriptions of these polytopes, their existence allows the authors of \emph{loc. cit.} to prove that the membership problem for $\Delta(Q, \beta)$ can be solved in strongly polynomial time for acyclic quivers $Q$.

On the other hand, in the general context of Geometric Complexity Theory, given a decision problem, it is not enough to find strongly polynomial time algorithms that are just efficient in theory (see \cite[pages 106-107]{MNS}). It is important to find simple, combinatorial algorithms that run in strongly polynomial time and do not depend on linear programming (or other complicated numerical procedures). Our polytopes $\p_{\unlm}$, available for $n$-complete bipartite quivers $\Q$, are explicit and can be geometrically visualized (see $(\ref{our-polytope-pic})$ and Definition \ref{defn-our-polytope}). This opens up the possibility of finding algorithms to test membership in $\Delta(\Q, \beta)$ in the same vein as the max-flow polynomial time algorithm found by B{\"u}rgisser and Ikenmeyer in \cite{BI}. 
\end{rmk}

The notion of a semi-stable quiver representation, introduced by King \cite{K} in the context of moduli spaces of quiver representations, plays a key role in understanding the positivity of the dimensions of weight spaces of semi-invariants. 

Let $\sigma \in \ZZ^{Q_0}$ be an integral weight of $Q$. A representation $W$ of $Q$ is $\sigma$-semi-stable if and only if the following conditions hold:
\begin{equation} \label{semi-stab-rep}
\sigma \cdot \ddim W=0 \text{~and~} \sigma \cdot \ddim(W')\leq 0, \; \forall \, W' \subseteq W.
\end{equation}

Let $\beta'$ be a dimension vector of $Q$ with $\beta'\leq \beta$, \emph{i.e.}, $\beta'(x) \leq \beta(x)$, $\forall x \in Q_0$. In what follows, we write $\beta' \hookrightarrow \beta$ to mean that a generic (equivalently, every) $\beta$-dimensional representation has a subrepresentation of dimension vector $\beta'$.  

\begin{example}\label{dim-sub-vectors-sinks-sources} If $x$ is a sink vertex of $Q$, it is immediate to see that any $\beta$-dimensional representation has the simple representation $S_x$ as a subrepresentation, and thus $e_x \hookrightarrow \beta$. On the other hand, if $x$ is a source vertex of $Q$, one can also easily see that $\beta-e_x \hookrightarrow \beta$.
\end{example}

The next fundamental result gives necessary and sufficient conditions for the positivity of $\dim \SI(Q, \beta)_{\sigma}$.

\begin{theorem}\label{King-criterion} For an integral weight $\sigma \in \ZZ^{Q_0}$ of $Q$, the following statements are equivalent:
\begin{enumerate}[\normalfont(1)]
\item $\dim \SI(Q, \beta)_\sigma>0$;

\item $\sigma \cdot \beta=0$ and $\sigma \cdot \beta' \leq 0$ for all $\beta' \hookrightarrow \beta$;

\item there exists a $\sigma$-semi-stable $\beta$-dimensional representation of $Q$;

\item there exists $W \in \rep(Q, \beta)$ such that 
	\[
	\sum_{\substack{a \in Q_1 \\ ta=x}} W(a)^* \cdot W(a) - \sum_{\substack{a \in Q_1 \\ ha=x}} W(a) \cdot W(a)^* = \sigma(x) \cdot \Id_{\beta(x)} \; \forall x \in Q_0.
	\]
\end{enumerate}
Consequently, weight spaces of quiver semi-invariants have the following \textbf{Saturation Property}:
$$
\dim \SI(Q, \beta)_{r\sigma}>0 \text{~for some positive integer~} r \geq 1 \text{~implies that~} \dim \SI(Q, \beta)_{\sigma}>0.
$$
\end{theorem}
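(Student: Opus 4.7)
The plan is to close the loop $(1) \Rightarrow (3) \Rightarrow (2) \Rightarrow (3) \Rightarrow (1)$ and then establish $(3) \Leftrightarrow (4)$ separately, after which the saturation property falls out of the homogeneity of condition (2) in $\sigma$. Broadly, this is a synthesis of King's GIT construction \cite{K}, the theory of generic subrepresentations, and the Kempf-Ness theorem for the moment map of the maximal compact subgroup of $\GL(\beta)$; the task is to keep the dictionaries straight rather than to carry out genuinely new computations.

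For $(1) \Leftrightarrow (3)$ I would follow King: a semi-invariant of weight $\sigma$ is a section of the $\chi_\sigma$-linearized trivial line bundle on $\rep(Q,\beta)$, so $\SI(Q,\beta)_\sigma \neq 0$ is equivalent to non-emptiness of the $\chi_\sigma$-semi-stable locus in the GIT sense. The Hilbert--Mumford numerical criterion for a one-parameter subgroup of $\GL(\beta)$ acting on $\rep(Q,\beta)$ reduces to a condition on weighted filtrations; the central observation of \cite{K} is that such filtrations are precisely filtrations of $W$ by subrepresentations, and the numerical criterion then translates exactly into \eqref{semi-stab-rep}.

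For $(2) \Leftrightarrow (3)$ I would combine the very definition of $\beta' \hookrightarrow \beta$ with openness of the semi-stable locus. For $(2) \Rightarrow (3)$: a generic $W$ has subrepresentations only of dimension vectors $\beta'$ with $\beta' \hookrightarrow \beta$, and $(2)$ then says $\sigma \cdot \beta' \leq 0$ on every such $\beta'$, so that $W$ is $\sigma$-semi-stable. Conversely, if a $\sigma$-semi-stable $W_0$ exists, then $\sigma \cdot \beta = 0$ is automatic, and for any $\beta' \hookrightarrow \beta$ the locus of $\beta$-dimensional representations admitting a $\beta'$-dimensional subrepresentation contains a non-empty open subset and hence meets the (non-empty, open) $\sigma$-semi-stable locus. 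Any common point yields a $\sigma$-semi-stable $W$ with a subrepresentation of dimension $\beta'$, forcing $\sigma \cdot \beta' \leq 0$.

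For $(3) \Leftrightarrow (4)$ the expression in (4), viewed componentwise as a function of $W$, is, up to the shift by $\sigma(x)\,\Id_{\beta(x)}$, the moment map for the action of the compact form $\prod_{x \in Q_0} U(\beta(x))$ on $\rep(Q,\beta)$ with its standard Hermitian structure. The Kempf--Ness theorem in its GIT formulation asserts that a $\GL(\beta)$-orbit is $\chi_\sigma$-semi-stable iff its closure contains a zero of the $\sigma$-shifted moment map; $(4) \Rightarrow (3)$ is then immediate, while $(3) \Rightarrow (4)$ amounts to passing to a polystable representative in the orbit closure before quoting Kempf--Ness. The saturation property is finally a consequence of $(1) \Leftrightarrow (2)$ and the homogeneity of (2): both $\sigma \cdot \beta = 0$ and the inequalities $\sigma \cdot \beta' \leq 0$ hold for $\sigma$ iff they hold for $r\sigma$ with any $r \in \ZZ_{>0}$. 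The main obstacle in the outline is the subtle point in $(2) \Leftrightarrow (3)$ that ``generic'' and ``every'' must agree for the existence of a $\beta'$-dimensional subrepresentation; this is the only step where genuinely representation-theoretic input beyond the GIT/Kempf--Ness formalism is required, and it is handled by the semicontinuity built into Schofield's theory of generic subrepresentations as quoted in \cite{DW1, CB}.
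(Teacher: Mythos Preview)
The paper does not supply its own proof of this theorem: it is quoted as background, attributed to King \cite{K} for the GIT equivalence, to Derksen--Weyman \cite{DW1} (and \cite{CB}) for the generic semi-stability and saturation parts, with condition (4) being the moment-map reformulation. So there is no ``paper's own proof'' to compare against; your outline is precisely the synthesis of those references that the authors are citing.

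Your sketch is accurate and hits the right references for each implication. One small remark on $(3)\Rightarrow(2)$: you do not actually need to intersect open loci. If some $W_0$ is $\sigma$-semi-stable and $\beta'\hookrightarrow\beta$, then by the definition used in the paper (``a generic (equivalently, every) $\beta$-dimensional representation has a subrepresentation of dimension vector $\beta'$''), $W_0$ itself already has a $\beta'$-dimensional subrepresentation, and $\sigma\cdot\beta'\leq 0$ follows directly. The genuine content, as you say, is the equivalence of ``generic'' and ``every'' in that definition, which is Schofield's theory; once that is granted the argument is one line rather than an open-set intersection.
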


The equivalence of $(1)$ and $(2)$, and the Saturation Property of quiver semi-invariants are due to Derksen and Weyman \cite{DW1} (see also \cite{CB}). The equivalence of $(2)$, $(3)$, and $(4)$ is due to King \cite{K}.

\begin{rmk} \phantomsection\label{wt-sign-at-sources-sinks} 
\begin{enumerate}
\item We point out that if $\dim \SI(Q, \beta)_{\sigma}>0$, then $\dim \SI(Q, \beta)_{r\sigma}>0$ for any positive integer $r$. Indeed, if $f \in \SI(Q, \beta)_\sigma$ is a non-zero semi-invariant then $f^r$ is a non-zero semi-invariant of weight $r\sigma$.

\item Assume that $\dim \SI(Q, \beta)_\sigma>0$. Then it follows from Theorem \ref{King-criterion} and Remark \ref{dim-sub-vectors-sinks-sources} that 
$$
\sigma(x) \geq 0 \text{~for any source vertex $x$,} \text{~and~} \sigma(y) \leq 0 \text{~for any sink vertex $y$}.
$$
\end{enumerate}
\end{rmk}

We recall another important result \cite[Lemma 6.5.7]{IOTW} (see also \cite[Lemma 3]{ChiGra-2019}) that gives necessary conditions for the positivity of $\dim \SI(Q,\beta)_{\sigma}$. It comes in handy in the proof of our main result,  Theorem \ref{main-formula-mult-thm}.

\begin{prop}\label{wt-dim-vector-prop} Let $\sigma \in \ZZ^{Q_0}$ be an integral weight of $Q$ with $\sigma=\langle \alpha, \cdot \rangle$ for a unique $\alpha \in \ZZ^{Q_0}$. If $\dim \SI(Q, \beta)_{\sigma}>0$ then $\alpha$ must be a dimension vector of $Q$, \emph{i.e.}, $\alpha(x) \geq 0$, $\forall x \in Q_0$.
\end{prop}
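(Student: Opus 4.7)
The plan is to derive the proposition from the Schofield--Van den Bergh/Derksen--Weyman spanning theorem for rings of quiver semi-invariants (see, e.g., \cite{DW1}): the $\CC$-vector space $\SI(Q, \beta)$ is linearly spanned by the Schofield (determinantal) semi-invariants $c^V$, one for each finite-dimensional representation $V$ of $Q$ satisfying $\langle \ddim V, \beta \rangle = 0$. Recall how $c^V$ is built: since $Q$ is acyclic, $\CC Q$ has global dimension at most one, so any such $V$ admits a minimal projective presentation $0 \to P_1(V) \to P_0(V) \to V \to 0$. For $W \in \rep(Q, \beta)$, applying $\Hom_Q(-, W)$ produces a $\CC$-linear map $d_V^W \colon \Hom_Q(P_0(V), W) \to \Hom_Q(P_1(V), W)$, whose source and target have equal $\CC$-dimension precisely when $\langle \ddim V, \beta \rangle = 0$; in that case one sets $c^V(W) := \det d_V^W$. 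A direct computation of the $\GL(\beta)$-action on $c^V$ shows it is a semi-invariant of weight $\langle \ddim V, \cdot \rangle$.

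Assuming $\dim \SI(Q, \beta)_{\sigma} > 0$, I pick any nonzero $f \in \SI(Q, \beta)_{\sigma}$ and invoke the spanning theorem to write $f = \sum_{i} c_i \, c^{V_i}$ for scalars $c_i \in \CC$ and representations $V_i$ of $Q$. Because $\SI(Q, \beta)$ is the direct sum of its weight spaces, every summand $c^{V_i}$ with $c_i \neq 0$ must itself lie in $\SI(Q, \beta)_{\sigma}$, so $\langle \ddim V_i, \cdot \rangle = \sigma = \langle \alpha, \cdot \rangle$. Since $Q$ is connected and acyclic, topologically ordering $Q_0$ makes the matrix representing $\alpha \mapsto \langle \alpha, \cdot \rangle$ on $\ZZ^{Q_0}$ unipotent lower-triangular, and this map is in particular injective; equivalently, this is the uniqueness clause already stated in the hypothesis of the proposition. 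We therefore conclude $\ddim V_i = \alpha$ for each contributing $i$. Since every $V_i$ is a bona fide representation of $Q$, its dimension vector lies in $\NN^{Q_0}$, giving $\alpha(x) \geq 0$ for all $x \in Q_0$, as required.

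The only real obstacle in this plan is external: invoking the Schofield--Van den Bergh/Derksen--Weyman spanning theorem as a black box. Once that is in place, what remains is an entirely mechanical combination of the triangular bijection between $\alpha$'s and $\sigma$'s available in the acyclic setting with the tautological observation that genuine representations have nonnegative dimension vectors. A more elementary route would try to induct along a topological order of $Q_0$ by constructing sub-dimension vectors $\beta' \hookrightarrow \beta$ (for instance, of the form $\beta|_U$ for $U \subseteq Q_0$ successor-closed, which are readily seen to satisfy $\beta|_U \hookrightarrow \beta$) that would violate King's inequality $\sigma \cdot \beta' \leq 0$ whenever some $\alpha(x) < 0$; but identifying such witnessing subsets seems considerably more delicate, whereas the spanning-theorem argument is short and uniform across all vertices.
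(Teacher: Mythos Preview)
Your proof is correct. The paper does not give its own proof of this proposition, instead citing \cite[Lemma~6.5.7]{IOTW} and \cite[Lemma~3]{ChiGra-2019}; your spanning-theorem argument is the standard proof of this fact and is in line with those references.
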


\begin{rmk} If $\beta$ is not sincere, then the positivity of $\dim \SI(Q, \beta)_\sigma$ does not necessarily imply that all the coordinates of $\alpha$ are non-negative. 
\end{rmk}
 
\subsection{The cone of effective weights}

Let $Q$ be a quiver and $\beta$ a sincere dimension vector of $Q$. The cone of effective weights associated to $(Q, \beta)$ is the rational convex polyhedral cone defined by
$$
\Eff(Q, \beta):=\{\sigma \in \RR^{Q_0} \mid \sigma \cdot \beta=0 \text{~and~}\sigma \cdot \beta'\leq 0, \; \forall \beta' \hookrightarrow \beta\}.
$$
It follows from Theorem \ref{King-criterion} that the lattice points of $\Eff(Q, \beta)$ is the affine semi-group of all integral weights $\sigma \in \ZZ^{Q_0}$ for which $\dim \SI(Q, \beta)_\sigma >0$. This is further equivalent to saying that there exists a $\beta$-dimensional $\sigma$-semi-stable representation. For further details, we refer the reader to \cite{DW1, DW2} and \cite{SVB}.

\begin{problem}[\textbf{The generic quiver semi-stability problem}]\label{gen-semi-stab-problem}
Let $Q$ be a quiver, $\beta$ a sincere dimension vector of $Q$, and $\sigma$ an integral weight of $Q$ such that $\sigma \cdot \beta=0$. Decide whether $\sigma$ belongs to $\Eff(Q, \beta)$.
\end{problem}

\begin{rmk} \label{generic-semi-problem-rmk} The Saturation Property for quiver semi-invariants tells us that for a given $\sigma \in \ZZ^{Q_0}$,
$$
\sigma \in \Eff(Q, \beta) \Longleftrightarrow \dim \SI(Q, \beta)_{\sigma} \neq 0.
$$
Thus, one might hope that a solution to  the Polytopal Problem \ref{poly-prob-quiver-semi-inv} combined with Tardos' algorithm would imply an effective solution to the generic quiver semi-stability Problem \ref{gen-semi-stab-problem}.  This is indeed the case assuming that the input is specified as in Remark \ref{rmk-non-natural-input-form}.

On the other hand,  when the input ($\beta$ and $\sigma$) is specified as lists of integers, each of length $|Q_0|$, we are not aware of any examples of polytopes $\p_\sigma$ (solutions to the Polytopal Problem) that can be constructed in strongly polynomial time from this input. We are thankful to M. Vergne and M. Walter for pointing this out to us. 
\end{rmk}

For the remainder of this section we assume that $Q$ is a bipartite quiver (not necessarily $n$-complete) with source vertices $x_1, \ldots, x_m$, and sink vertices $y_1, \ldots, y_\ell$. For a sincere dimension vector $\beta$, let $Q_\beta$ be the flag extension of $Q$ defined as below, where the flag $\F(x)$ is an equioriented type $\mathbb{A}$ quiver with $\beta(x) -1$ arrows for each $x \in Q_0$. We use \tikz   \draw [line width=1pt, double distance=2pt,
             arrows = {-Latex[length=0pt 3 .5]}] (0,0) -- (1,0);  to indicate that multiple arrows are allowed between vertices but  $Q$ need not be $n$-complete.
             
             \black

\begingroup\makeatletter\def\f@size{9.5}\check@mathfonts
\begin{equation}\label{flag-extension-pic}
Q_\beta:~
\vcenter{\hbox{  
\begin{tikzpicture}[point/.style={shape=circle, fill=black, scale=.3pt,outer sep=3pt},>=latex]
   \node[point,label={above:$x_1$}] (1) at (-4,1.75) {};
   \node[point,label={above:$x_2$}] (2) at (-4,.5) {};
   \node[point,label={below:$x_m$}] (3) at (-4,-1.75) {};
   
   \node[point,label={above:$y_1$}] (-1) at (0,1.75) {};
   \node[point,label={above:$y_2$}] (-2) at (0,.5) {};
   \node[point,label={below:$y_\ell$}] (-3) at (0,-1.75) {};
  
 \draw[decorate sep={.25mm}{3mm},fill] (0,.25) -- (0,-1.5);
  \draw[decorate sep={.25mm}{3mm},fill] (-4,.25) -- (-4,-1.5);

     \draw [line width=1pt, double distance=2pt,
             arrows = {-Latex[length=0pt 3 .5]}] (1) -- (-1);
        \draw [line width=1pt, double distance=2pt,
             arrows = {-Latex[length=0pt 3 .5]}] (1) -- (-3);
             
             \draw [line width=1pt, double distance=2pt,
             arrows = {-Latex[length=0pt 3 .5]}] (2) -- (-1);
             
                \draw [line width=1pt, double distance=2pt,
             arrows = {-Latex[length=0pt 3 .5]}] (2) -- (-3);
             
              \draw [line width=1pt, double distance=2pt,
             arrows = {-Latex[length=0pt 3 .5]}] (3) -- (-1);
             
              \draw [line width=1pt, double distance=2pt,
             arrows = {-Latex[length=0pt 3 .5]}] (3) -- (-2);
             
              \draw [line width=1pt, double distance=2pt,
             arrows = {-Latex[length=0pt 3 .5]}] (3) -- (-3);

    %\path[->]  
 % (1) edge  (-1)
  %(1) edge  [bend left=10] (-2)
  %(1) edge  [bend right=10] (-3)
 % (2) edge  (-1)
  %(2) edge (-2)
  %(2) edge [bend right=5]  (-3)
  %(3) edge  (-1)
  %(3) edge (-2)
  %(3) edge  (-3);
  
    \path[draw, ->, snake it]  (-6,1.75)--(1) node[midway,above] {$\F(x_1)$};
    \path[draw, ->, snake it]  (-6,.5)--(2) node[midway,above] {$\F(x_2)$};
    \path[draw, ->, snake it]  (-6,-1.75)--(3) node[midway,above] {$\F(x_m)$};
    \path[draw, ->, snake it]  (-1)--(2,1.75) node[midway,above] {$\F(y_1)$};    
    \path[draw, ->, snake it]  (-2)--(2,.5) node[midway,above] {$\F(y_2)$};
    \path[draw, ->, snake it]  (-3)--(2,-1.75) node[midway,above] {$\F(y_{\ell})$};
\end{tikzpicture} 
}}
\end{equation}
\endgroup

We define $\widetilde{\beta}$ to be the extension of $\beta$ to $Q_{\beta}$ that takes values $1, \ldots, \beta(x_i)$ along the vertices (from left to right) of the flag $\F(x_i)$, $i \in [m]$, and $\beta(y_j), \ldots, 1$ along the vertices (from left to right) of the flag $\F(y_j)$, $j \in [\ell]$.

\begin{lemma}\label{eff-wts-cone-flags-lemma} Let $Q$ be a bipartite quiver with source vertices $x_1, \ldots, x_m$, and sink vertices $y_1, \ldots, y_\ell$, and let $\beta$ be a sincere dimension vector of $Q$. If $\widetilde{\sigma} \in \Eff(Q_\beta, \widetilde{\beta})$ is an effective weight, then
$$
\restr{\widetilde{\sigma}}{\F(x_i)}\geq 0, \forall i \in [m], \text{~and~} \restr{\widetilde{\sigma}}{\F(y_j)}\leq 0, \forall j \in [\ell].
$$
\end{lemma}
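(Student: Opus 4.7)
The plan is to invoke Theorem \ref{King-criterion}(2), which says that for $\widetilde{\sigma} \in \Eff(Q_\beta, \widetilde{\beta})$ one has $\widetilde{\sigma} \cdot \widetilde{\beta} = 0$ and $\widetilde{\sigma} \cdot \beta' \leq 0$ for every generic subdimension vector $\beta' \hookrightarrow \widetilde{\beta}$. Combined with $\widetilde{\sigma} \cdot \widetilde{\beta} = 0$, it will suffice to show that $\widetilde{\beta} - e_v \hookrightarrow \widetilde{\beta}$ for each vertex $v$ of $\F(x_i)$ and that $e_u \hookrightarrow \widetilde{\beta}$ for each vertex $u$ of $\F(y_j)$.

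For a source $x_i$ I will label $\F(x_i)$ as $v_1 \to v_2 \to \cdots \to v_r \to v_{r+1} = x_i$ with $\widetilde{\beta}(v_k) = k$. Because the dimensions strictly increase along the flag, for a generic $V \in \rep(Q_\beta, \widetilde{\beta})$ every flag map $a_k : v_{k-1} \to v_k$ is injective. Given $k$, I would set $W(x) = V(x)$ for all $x \neq v_k$ and $W(v_k) = \Ima V(a_k)$ (with the convention $W(v_1) = 0$ when $k = 1$). This $W(v_k)$ is a $(k-1)$-dimensional subspace of $V(v_k)$ and, because $v_k$ has no incoming arrow other than $a_k$, a routine check shows that $W$ is a subrepresentation of $V$ of dimension vector $\widetilde{\beta} - e_{v_k}$. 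Theorem \ref{King-criterion}(2) then forces $\widetilde{\sigma}(v_k) = -\widetilde{\sigma} \cdot (\widetilde{\beta} - e_{v_k}) \geq 0$.

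For a sink $y_j$ I will label $\F(y_j)$ as $u_0 = y_j \to u_1 \to \cdots \to u_{s-1}$ with $\widetilde{\beta}(u_k) = s - k$, where $s = \beta(y_j)$. The flag maps $b_k : u_k \to u_{k+1}$ are now generically surjective with one-dimensional kernel. Given $k$, I would take $W(u_k) = \ker V(b_k)$ (replaced by $W(u_{s-1}) = V(u_{s-1}) = \CC$ when $k = s-1$) and $W = 0$ at every other vertex. Then $W$ is a subrepresentation of $V$ isomorphic to $S_{u_k}$, so Theorem \ref{King-criterion}(2) gives $\widetilde{\sigma}(u_k) \leq 0$.

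The only conceptual point requiring care is an asymmetry between the two flag orientations: since the outgoing flag map from $v_1$ is injective, the simple $S_{v_1}$ is \emph{not} a subrepresentation of a generic source-flag representation, so the simpler sink-flag construction does not dualize directly to the source side. The complementary construction via $\widetilde{\beta} - e_{v_k}$ handles this, and once it is in place the lemma follows by a direct application of King's generic semistability criterion.
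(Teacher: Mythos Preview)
Your proof is correct and follows essentially the same approach as the paper's: both arguments exhibit, for each flag vertex on the source side, a generic subrepresentation of dimension $\widetilde{\beta}-e_v$ built from the image of the incoming injective flag map, and for each flag vertex on the sink side, a copy of $S_u$ built from the kernel of the outgoing surjective flag map, then apply King's criterion. The only cosmetic difference is that the paper treats the extremal source and sink vertices of $Q_\beta$ separately via Remark~\ref{wt-sign-at-sources-sinks}, whereas you absorb those edge cases (the $k=1$ and $k=s-1$ conventions) into the same uniform construction.
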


\begin{proof} We already know that $\widetilde{\sigma}$ is non-negative at the $m$ source vertices of $Q_\beta$ and non-positive at the $l$ sink vertices of $Q_\beta$ by Remark \ref{wt-sign-at-sources-sinks}.

Now let $W \in \rep(Q_\beta, \widetilde{\beta})$ be a generic representation such that $W(a)$ is injective along any given arrow $a$ of a flag $\F(x_i)$ and $W(b)$ is surjective along any given arrow $b$ of a flag $\F(y_j)$.  Since $\widetilde{\beta}(ha)=\widetilde{\beta}(ta)+1$ and $\widetilde{\beta}(tb)=\widetilde{\beta}(hb)+1$, it is immediate to see that $W$ has subrepresentations $W'_1$ and $W'_2$ of dimension vector $\widetilde{\beta}-e_{ha}$ and $e_{tb}$, respectively, where $W'_1$ is the same as $W$ except that at vertex $ha$ where $W'_1$ is the $(\beta(ha)-1)$-dimensional image of $W(a)$, and $W'_2$ is zero everywhere except at vertex $tb$ where $W'_2$ is the one-dimensional kernel of $W(b)$. 

The argument above shows that if $z$ is a non-source vertex of  $Q_\beta$ lying along one of the flags $\F(x_i)$, then $\widetilde{\beta}-e_z \hookrightarrow \widetilde{\beta}$ and thus $\widetilde{\sigma}(z) \geq 0$. Furthermore, if $z$ is a non-sink vertex of $Q_\beta$ lying along one of the flags $\F(y_j)$, then $e_z \hookrightarrow \widetilde{\beta}$ and thus $\widetilde{\sigma}(z) \geq 0$. This now completes the proof.
\end{proof}

\begin{rmk} 
\begin{enumerate}
\item As hinted in Theorem \ref{King-criterion}, there is a tight relationship between the moment cone $\Delta(Q, \beta)$ and the cone of effective weights $\Eff(Q_\beta, \widetilde{\beta})$; see Proposition \ref{moment-cone-effecive-cone-prop} for full details.

\item Let $\sigma$ be an integral weight of $Q$ and let $\sigma'$ be its trivial extension to $Q_\beta$ defined to be zero at all other vertices of $Q_\beta$. Then one can check that 
$$
\SI(Q, \beta)_\sigma=\SI(Q_{\beta}, \widetilde{\beta})_{\sigma'}.
$$
\end{enumerate}
\end{rmk}

\section{Quiver exceptional sequences and the Embedding Theorem for quiver semi-invariants} \label{qes-embedding-sec}
In this section, we first review Derksen-Weyman's  Embedding Theorem for quiver semi-invariants. This result allows us to embed the quiver $\Q_{\beta}$ into a new quiver, denoted below by $\river$, without changing the dimensions of the weight spaces of semi-invariants for $\Q_\beta$. The advantage of working with $\river$ is that it is significantly easier to find a polytopal description for the dimensions of its spaces of semi-invariants than for those of $\Q_\beta$ (see Sections \ref{compute-semi-inv-river-sec} - \ref{our-polytope-sec}).

In what follows, by a Schur representation $V$ of a quiver $Q$, we mean a representation such that $\dim \End_Q(V)=1$, i.e., $\End_Q(V)=\{(\lambda\Id_{V(x)})_{x \in Q_0} \mid \lambda \in \CC \}$. Furthermore, for two dimension vectors $\alpha$ and $\beta$, we define $(\alpha \circ \beta)_Q:=\dim \SI(Q, \beta)_{\langle \alpha, \cdot \rangle}$. (Whenever the quiver is understood from the context, we drop the subscript $Q$ and simply write $\alpha \circ \beta$ for the dimension of $\SI(Q, \beta)_{\langle \alpha, \cdot \rangle}$.) 

\begin{definition}[\textbf{Quiver Exceptional Sequences}] Let $Q=(Q_0, Q_1, t, h)$ be a quiver. A sequence $\E = (\eps_1, \ldots \eps_N)$ of dimension vectors is said to be a \emph{quiver exceptional sequence} if: 
	\be
		\item each $\eps_i$ is a real Schur root, \emph{i.e.},  $\langle \eps_i, \eps_i \rangle = 1$ and $\eps_i$ is the dimension vector of a Schur representation for all $i \in [N]$;
		
	    \item $\langle \eps_i, \eps_j \rangle \leq 0$ and $\eps_j \circ \eps_i\neq 0$ for all $1 \leq i < j \leq N$. 
	\ee
\end{definition} 

\begin{remark} To check the second condition in the definition above, we will use the following fact which is a consequence of Derksen-Weyman's First Fundamental Theorem for quiver semi-invariants \cite{DW1} (see also \cite{CB}). For two dimension vectors $\alpha$ and $\beta$ of $Q$, we have that $\alpha \circ \beta \neq 0$ if and only if
$$
\langle \alpha, \beta \rangle =0 \text{~and~} \Hom_Q(V, W)=0
$$ 
for some representations $V$ and $W$ of dimension vectors $\alpha$ and $\beta$, respectively.
\end{remark}

To any quiver exceptional sequence $\E = (\eps_1, \ldots, \eps_N)$, we associate the quiver $Q(\eps)$ with vertices $1, \ldots, N$ and $-\langle \eps_i, \eps_j \rangle$ arrows from vertices $i$ to $j$ for all $1 \leq i \neq j \leq N$. Let
\[ \I \colon \RR^N \longrightarrow \RR^{Q_0}\] 
be the map defined by 
\[ \I\left(\gamma(1), \ldots, \gamma(N) \right) \colon = \sum_{i=1}^{N} \gamma(i) \eps_i \; \; \text{for all } \; \gamma = \left (\gamma(1), \ldots, \gamma(N) \right) \in \RR^N.\]

We are now ready to state Derksen-Weyman's Embedding Theorem which plays a key role in our approach to computing the dimensions of weight spaces of quiver semi-invariants.

\begin{theorem}[\textbf{The Embedding Theorem for Quiver Semi-Invariants}] \cite{DW2}
\label{thm:embedding-semi-invariants}
Let $Q=(Q_0, Q_1, t, h)$ be a quiver and $\E = \left (\eps_1, \ldots, \eps_N \right)$ a quiver exceptional sequence. If $\alpha$ and $\beta$ are two dimension vectors of $Q(\eps)$, then
\[ (\alpha \circ \beta )_{Q(\eps)} = \left ( \I(\alpha) \circ \I(\beta) \right )_Q.\] 
\end{theorem}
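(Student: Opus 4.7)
The plan is to produce an explicit morphism of representation varieties $\rep(Q(\eps), \beta) \to \rep(Q, \I(\beta))$ and transport semi-invariants along it, reducing the asserted equality of dimensions to a comparison of two spanning sets of Schofield-type semi-invariants.

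First, I would promote the purely numerical data $\E = (\eps_1, \ldots, \eps_N)$ to an honest exceptional sequence of representations $(E_1, \ldots, E_N)$ in $\rep(Q)$ with $\ddim E_i = \eps_i$. The condition $\langle \eps_i, \eps_i \rangle = 1$ together with the existence of a Schur representation of dimension $\eps_i$ makes each $E_i$ rigid with $\End_Q(E_i) = \CC$. The condition $\eps_j \circ \eps_i \neq 0$ for $i < j$, interpreted via the $\Hom$-vanishing criterion recalled in the excerpt, forces $\langle \eps_j, \eps_i \rangle = 0$ and (for generic choices of the $E_i$'s) both $\Hom_Q(E_j, E_i) = 0$ and $\Ext^1_Q(E_j, E_i) = 0$. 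The Euler form then gives
\[
\dim \Ext^1_Q(E_i, E_j) - \dim \Hom_Q(E_i, E_j) = -\langle \eps_i, \eps_j \rangle \qquad (i<j),
\]
which is exactly the number of arrows from $i$ to $j$ in $Q(\eps)$.

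Next, for each dimension vector $\beta$ of $Q(\eps)$, I would construct an equivariant morphism of affine varieties
\[
\Phi_\beta : \rep(Q(\eps), \beta) \longrightarrow \rep(Q, \I(\beta))
\]
via iterated universal extensions: given $M \in \rep(Q(\eps), \beta)$, build $\Phi_\beta(M)$ recursively as successive extensions of the blocks $E_1^{\beta(1)}, E_2^{\beta(2)}, \ldots, E_N^{\beta(N)}$, interpreting each arrow $a: i \to j$ of $Q(\eps)$ as a basis vector of $\Ext^1_Q(E_i, E_j)$ and using the linear map $M(a)$ to prescribe the corresponding extension cocycle. The construction is equivariant along a natural group homomorphism from the base-change group of $(Q(\eps), \beta)$ to the base-change group of $(Q, \I(\beta))$ that acts block-diagonally on the filtration layers $E_i^{\beta(i)}$. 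Pullback along $\Phi_\beta$ therefore yields a linear map
\[
\Phi_\beta^* : \SI(Q, \I(\beta))_{\langle \I(\alpha), \cdot \rangle} \longrightarrow \SI(Q(\eps), \beta)_{\langle \alpha, \cdot \rangle}.
\]

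The final step is to show that $\Phi_\beta^*$ is an isomorphism of weight spaces. By Derksen-Weyman's First Fundamental Theorem, both sides are spanned by determinantal Schofield-type semi-invariants $c^V$ (resp.\ $c^{V'}$) indexed by representations of dimension $\I(\alpha)$ (resp.\ $\alpha$), and the assignment $V' \mapsto \Phi_\alpha(V')$ intertwines these generators so that $\Phi_\beta^*(c^{\Phi_\alpha(V')}) = c^{V'}$, producing surjectivity. The main obstacle is showing that the Schofield semi-invariants $c^V$ for $V$ outside the extension closure of $(E_1, \ldots, E_N)$ contribute nothing new, so that $\Phi_\beta^*$ is also injective. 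I would handle this by induction on $N$: using Schofield's theorem on perpendicular categories, $E_1^\perp$ is equivalent to $\rep(Q')$ for a quiver $Q'$ with one fewer vertex, the tail $(\eps_2, \ldots, \eps_N)$ restricts to an exceptional sequence in $Q'$, and the inductive hypothesis identifies the corresponding weight spaces of semi-invariants; the base case $N=1$ is immediate since both sides compute the same determinantal semi-invariant attached to $E_1$.
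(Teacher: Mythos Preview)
This theorem is not proved in the paper; it is cited from Derksen--Weyman \cite{DW2} and used as a black box, so there is no ``paper's own proof'' to compare your proposal against.

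Your outline does follow the general architecture of the Derksen--Weyman argument: realize the $\eps_i$ by actual exceptional objects $E_i$, build a functorial passage from $\rep(Q(\eps))$ into the extension closure of the $E_i$ inside $\rep(Q)$, and transport Schofield determinantal semi-invariants across it. Two places in your sketch would require substantial additional work, however. First, identifying the arrows $i\to j$ of $Q(\eps)$ with a basis of $\Ext^1_Q(E_i,E_j)$ needs $\Hom_Q(E_i,E_j)=0$ for $i<j$, so that $-\langle\eps_i,\eps_j\rangle=\dim\Ext^1_Q(E_i,E_j)$; the hypotheses in the paper's definition only directly yield the vanishing in the opposite direction, $\Hom_Q(E_j,E_i)=\Ext^1_Q(E_j,E_i)=0$. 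The forward vanishing is true, but it is a nontrivial fact about exceptional pairs (at most one of $\Hom$ and $\Ext^1$ is nonzero in the forward direction), and you have not invoked it. Second, your injectivity step---showing that Schofield semi-invariants $c^V$ with $V$ outside the extension closure of $(E_1,\ldots,E_N)$ contribute nothing new---is exactly the crux of the theorem. The perpendicular-category induction you propose is the right strategy, but as written it presupposes that the weight space $\SI(Q,\I(\beta))_{\langle\I(\alpha),\cdot\rangle}$ can already be computed inside $E_1^\perp$, which is essentially what needs to be shown; a careful formulation of how restriction to $E_1^\perp$ interacts with the weight decomposition is required before the inductive hypothesis applies.
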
 

\smallskip
We end this section with an important example. Let $\Q$ be the $n$-complete bipartite quiver with source vertices $x_1, \ldots, x_m$,  sink vertices $y_1, \ldots, y_{\ell}$, and $n$ arrows from $x_i$ to $y_j$ for every $i \in [m]$ and $j \in [\ell]$. Let $\beta$ be a sincere dimension vector of $\Q$ and let $\Q_{\beta}$ be the corresponding flag-extension of $\Q$. In what follows, we show how to realize $\Q_{\beta}$ as $\river(\E)$ for a suitable quiver $\river$ and quiver exceptional sequence $\E$. 

\bigskip
Let $\river$ be the quiver defined as:
\smallskip
\begingroup\makeatletter\def\f@size{9.5}\check@mathfonts
$$
\vcenter{\hbox{  
\begin{tikzpicture}[point/.style={shape=circle, fill=black, scale=.3pt,outer sep=3pt},>=latex, decoration=snake]
   \node[point,label={below:$y_0$}] (0) at (1.5,0) {};
   \node[point,label={above:$x_0$}] (1) at (-1.5,0) {};

    \node[point,label={below:$y_1$}] (-1) at (3.7,2) {};
   \node[point,label={below:$y_{\ell}$}](-2) at (3.7,-2) {};
       
       \node[point,label={below:$x_{1}$}](2)  at (-3.5,2.5) {};
        \node[point,label={below:$x_{2}$}](3) at (-3.5,1.35) {};
      \node[point,label={above:$x_{m}$}](4) at (-3.5, -.25) {};
       \node[point,label={below:$x_{m+1}$}](5) at (-3.5, -.65) {};
     \node[point,label={below:$x_{m+n}$}](6) at (-3.5, - 2.5){};

     \draw[decorate sep={.25mm}{3mm},fill] (3.7,1.5)--(3.7,-1.5);
       \draw[decorate sep={.25mm}{2mm},fill] (-3.5,.75)--(-3.5,.1);   
       \draw[decorate sep={.25mm}{3mm},fill] (-3.5,-1.2)--(-3.5,-2.1);

    \path[->]  
 (0) edge node[above] {$b1$} (-1)  
  (0) edge  node[below] {$b_l$}(-2) 
  (1) edge node[above] {$a$} (0)  
   (2) edge  node[above] {$a_1$}  (1) 
    (3) edge node[above] {$a_2$}  (1) 
     (4) edge node[above] {$a_m$}   (1) 
      (5) edge node[below] {$a_{m+1}$}   (1) 
      (6) edge  node[below, xshift=1em] {$a_{m+n}$}  (1);
   
   \path[draw, ->, snake it]  (-1) -- (5.5, 2); 
    \path[draw, ->, snake it]   (-2) -- (5.5, -2); 
       \path[draw, ->, snake it]  (-5.5, 2.5) -- (2) ; 
         \path[draw, ->, snake it]  (-5.5, 1.35)-- (3); 
         \path[draw, ->, snake it]   (-5.5, -.25) -- (4) ; 
          %\path[draw, ->, snake it] (5) -- (5, -1.75); 

\end{tikzpicture} 
}}
$$
\endgroup

\smallskip
\noindent
with the flag $\F(x_i)$ going in vertex $x_i$ of length $\beta(x_i)-1$, $\forall i \in [m]$, and the flag $\F(y_j)$ going out of vertex $y_j$ of length $\beta(y_j)-1$, $\forall j \in [\ell]$. Note that there are no flags attached to the $n$ vertices $x_{m+1}, \ldots, x_{m+n}$. Also, if $\beta$ takes value one at a vertex of $\Q$, then no flag is attached to that vertex in $\river$.

Next, consider the dimension vectors $\delta_1, \ldots, \delta_m$ of $\river$ defined by
$$
\delta_i(x_0)=n+1, \delta_i(y_0)=n, \delta_i(x_i)=\delta_i(x_{m+1})=\ldots=\delta_i(x_{m+n})=1,
$$
and $\delta_i$ is zero at all other vertices of $\river$. To build the desired quiver exceptional sequence, we will work with the following dimension vectors: 
\begin{itemize}
\item the simple roots at the vertices of the flag $\F(x_i) \setminus \{x_i\}, i \in [m]$; 
\item $\delta_1, \ldots, \delta_m$;
\item the simple roots at the vertices of the flag $\F(y_j), j \in [\ell]$. 
\end{itemize}

\begin{prop} \label{prop-embed-qes} The dimension vectors above can be ordered to form a quiver exceptional sequence $\E$ for $\river$ such that $\river(\E) = \Q_{\beta}$.
\end{prop}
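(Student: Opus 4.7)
The strategy is to specify an explicit ordering of the listed dimension vectors, verify the two axioms of a quiver exceptional sequence, and then identify $\river(\E)$ with $\Q_\beta$ by tabulating the Euler pairings $-\langle \eps_i, \eps_j \rangle$. For the ordering, I would place first the simple roots at the flag vertices of $\F(x_i) \setminus \{x_i\}$ for $i \in [m]$, traversing each flag from its tail toward $x_i$ (with the order across distinct flags immaterial), then $\delta_1, \ldots, \delta_m$, and finally the simple roots at the vertices of each $\F(y_j)$, traversed from $y_j$ outward along the flag. This is the natural linear extension of the arrow-direction partial order on the vertices of $\river$ that appear in the supports of the chosen dimension vectors.

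Simple roots on the loop-free acyclic quiver $\river$ are automatically real Schur roots, so the only nontrivial Schur-root verification is for the $\delta_i$'s. A direct expansion of the Euler form of $\river$ gives $\langle \delta_i, \delta_i \rangle = 1$. To produce a Schur representation $V$ of dimension vector $\delta_i$, I would take $V(a_i)$ and $V(a_{m+1}), \ldots, V(a_{m+n})$ to be $n+1$ linearly independent vectors in $V(x_0) = \CC^{n+1}$, take $V(a) \colon \CC^{n+1} \onto V(y_0) = \CC^n$ to be a generic surjection, and set all remaining maps to zero. The commutation relations at $x_i, x_{m+1}, \ldots, x_{m+n}$ force any endomorphism to act diagonally on $V(x_0)$ in this basis, and the genericity of $\ker V(a)$ then forces this diagonal to be a scalar, which in turn determines the endomorphism on $V(y_0)$ by surjectivity of $V(a)$.

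For the pairwise axiom $\langle \eps_i, \eps_j \rangle \leq 0$ and $\eps_j \circ \eps_i \neq 0$ for $i < j$, I would compute the Euler pairings block by block. The nonzero entries of $-\langle \eps_i, \eps_j \rangle$ turn out to be: $1$ between adjacent simple roots within a single flag; $1$ between the simple root at the terminal flag vertex of $\F(x_i) \setminus \{x_i\}$ and $\delta_i$; and $n$ between $\delta_i$ and $e_{y_j}$. All reverse-ordered pairings $\langle \eps_j, \eps_i \rangle$ with $i < j$ vanish by direct computation (in the $\delta$-$\delta$ case the vertex and arrow contributions of $\river$ exactly cancel). The $\Hom$-vanishing half of $\eps_j \circ \eps_i \neq 0$ is immediate for pairs involving distinct simple roots or a simple root with a $\delta_i$; the only substantive case is $\Hom(V_{\delta_j}, V_{\delta_i}) = 0$ for $i \neq j$, where a generic-point calculation shows that any morphism must kill the image of $V_{\delta_j}(a_j)$ in the central $\CC^{n+1}$ and scale each $V_{\delta_j}(a_{m+k})$ into the line spanned by $V_{\delta_i}(a_{m+k})$; the resulting forced linear relation among the images of these $n+1$ vectors under $V_{\delta_j}(a)$ and $V_{\delta_i}(a)$ in $\CC^n$ then compels all the scalars to vanish.

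The tabulated nonzero entries of $-\langle \eps_i, \eps_j \rangle$ reproduce exactly the arrows of $\Q_\beta$ --- single arrows along each flag and $n$ arrows between each ``former source'' $\delta_i$ and each ``former sink'' $e_{y_j}$ --- so under the evident vertex bijection in which $\delta_i$ corresponds to $x_i$ and the chosen simple roots correspond to the remaining vertices of $\Q_\beta$, we obtain $\river(\E) = \Q_\beta$. The main obstacle I anticipate is establishing the Schur-ness of $\delta_i$ together with the companion $\Hom$-vanishing for $\delta$-$\delta$ pairs, both of which require attention to the central linear algebra at $V(x_0)$ and $V(y_0)$; all remaining verifications are routine Euler-form bookkeeping organized by block.
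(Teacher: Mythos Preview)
Your proposal is correct and follows essentially the same approach as the paper: the ordering, the block-by-block Euler-form bookkeeping, and the identification $\river(\E)=\Q_\beta$ all match. The only difference is cosmetic---you verify that each $\delta_i$ is a real Schur root and that $\Hom_\river(V_{\delta_j},V_{\delta_i})=0$ via a generic choice of the central maps, whereas the paper writes down one explicit representation $V_i$ (with $V_i(a_{m+k})$ the $k$th coordinate inclusion, $V_i(a_i)$ the $(n{+}1)$st, and $V_i(a)(t_1,\dots,t_{n+1})=(t_1+t_{n+1},\dots,t_n+t_{n+1})$) and reads off the same conclusions directly.
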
 
\begin{proof} To obtain the sequence $\E$, we list the simple roots at the vertices of the flags $\F(x_1)\setminus \{x_1\},$ $\ldots,\F(x_m) \setminus \{x_m\}$ by going through the vertices of each flag from left to right starting with the flag $\F(x_1)$. Next, we list the dimension vectors $\delta_1, \ldots, \delta_m$. Finally,  we list the simple roots at the vertices of the flags $\F(y_1), \ldots, \F(y_\ell)$ by going through the vertices of each flag from left to right starting with the flag $\F(y_1)$. 

It is clear that any simple root is a real Schur root. Next, we show that the $\delta_i$ are real Schur roots and $\delta_i \perp \delta_j=0$ for all $i,j \in [m]$. For each $i \in [m]$,  consider the representation $V_i$ of $\river$ defined by 
\begin{itemize}
\item $V_i(x_0) = \CC^{n+1}, \; V_i(y_0) = \CC^n, \; V_i(x_i) =V_i(x_{m+1})= \ldots = V_i(x_{m+n}) = \CC$, and $V$ is zero at the remaining vertices;
\item $V_i(a): \CC^{n+1} \to \CC^n$ sends $(t_1,\ldots, t_{n+1})$ to $(t_1+t_{n+1},\ldots, t_n+t_{n+1})$;
\item $V_i(a_i): \CC \to \CC^{n+1}$ is the $(n+1)^{th}$ canonical inclusion of $\CC$ into $\CC^{n+1}$;
\item $V_i(a_{m+k}):\CC \to \CC^{n+1}$ is the $k^{th}$ canonical inclusion of $\CC$ into $\CC^{n+1}$ for every $k \in [n]$.
\end{itemize}

It is immediate to check that $\End_{\river}(V_i) = \{\lambda \Id_{V_i} \mid \lambda \in \CC\}$, \textit{i.e.}, $V_i$ is a Schur representation of dimension vector $\delta_i$ which together with the fact that $\langle \delta_i, \delta_i \rangle =1$ proves that $\delta_i$ is a real Schur root for all $i \in [m]$. Also, we have that $\Hom_{\river}(V_i,V_j) = 0$ and $\langle \delta_i, \delta_j \rangle = 0$, which imply that $\delta_i \circ \delta_j\neq 0$ for all $1 \leq i \neq j \leq m$. Thus, it is now clear that $\E$ is a quiver exceptional sequence with $\river(\E) = \Q_\beta$.
\end{proof}

\begin{example} In what follows, for two quivers $Q'$ and $Q$, we write $Q' \hookrightarrow Q$ to mean that $Q'=Q(\E)$ for an explicit quiver exceptional sequence $\E$.

\begin{enumerate}
\item  (\textbf{$n$-Kronecker quivers})

\[
\begin{tikzpicture}[point/.style={shape=circle, fill=black, scale=.3pt,outer sep=3pt},>=latex]
   \node[point,label={left:$ $}] (1) at (-2.5,0) {};
%   \node[point,label={left:$ $}] (2) at (-2,.25) {};
  % \node[point,label={left:$ $}] (3) at (-2.5,-.75) {};
   \node[point,label={right:$ $}] (4) at (-.5,0) {};

        \draw[decorate sep={.25mm}{2mm},fill] (-1.5,.45)--(-1.5,-.5);

      \draw[right hook->] (.7,0) -- (1.6, 0);

    \path[->]  
  (1) edge  [bend left=60] (4);
  
      \path[->]  
  (1) edge  [bend right=60] (4);
  
 % (2) edge  (4)
 % (3) edge [bend right=20] (4); 

  % \draw [decorate,
  %  decoration = {calligraphic brace, mirror, raise=3pt}] (-2.7, .7) --  (-2.7,-.7)
    \node at (-1.5,.75) { \footnotesize $n$ arrows}; 
    
   %%%%%%%%%%%%%%%%%%%
   
    \node[point,label={left:$ $}] (5) at (5,.75) {};
   
   \node[point,label={left:$ $}] (7) at (5,-.75) {};
   \node[point,label={right:$ $}] (8) at (7,0) {};
     \node[point,label={right:$ $}] (9) at (9,0) {};
        %\node[point,label={left:$ $}] (10) at (3.5,.5) {};
 %  \node[point,label={left:$ $}] (11) at (10,.25) {};
   \node[point,label={left:$ $}] (12) at (11,0) {};

            \draw[decorate sep={.25mm}{2mm},fill] (5,.5)--(5,-.5);   
            
    \path[->]  
  (5) edge   [bend left=15] (8)
  (7) edge  [bend right=15]  (8)
  (8) edge  (9)
   % (10) edge  [bend left=5]  (9)
  (9) edge  (12); 
  
     \draw [decorate,
    decoration = {calligraphic brace, mirror, raise=15pt}] (5,.7) --  (5,-.7)
    node[pos=0.5,left=15pt,black]{\footnotesize $n+1$ sources}; 

\end{tikzpicture}  \] 

\item (\textbf{complete bipartite quivers})

\[\begin{tikzpicture}[point/.style={shape=circle, fill=black, scale=.3pt,outer sep=3pt},>=latex]
   \node[point,label={left:$ $}] (1) at (-2.5,1.25) {};
   \node[point,label={left:$ $}] (2) at (-2.5,-1.25) {};
   \node[point,label={right:$ $}] (3) at (-.5,.75) {}; 
   \node[point,label={left:$ $}] (4) at (-.5,-.75) {};

        \draw[decorate sep={.25mm}{2mm},fill] (-2.5, 1)--(-2.5,-1.1);
         \draw[decorate sep={.25mm}{2mm},fill] (-.5, .5)--(-.5,-.5);

     \path[->]  
  (1) edge  [bend left=15] (3)
  (1) edge [bend left=15] (4)
    (2) edge  [bend right=15] (3)
  (2) edge [bend right=15] (4);

   \draw [decorate,
    decoration = {calligraphic brace, mirror, raise=3pt}] (-2.7, 1.25) --  (-2.7,-1.25)
    node[pos=0.5,left=5pt,black]{ \footnotesize $m$ sources}; 
    
       \draw [decorate,
    decoration = {calligraphic brace, raise=3pt}] (-.5, .75) --  (-.5,-.75)
    node[pos=0.5,right=5pt,black]{ \footnotesize $\ell$ sinks};

     \draw[right hook->] (1.25,0) -- (2.25, 0); 
   %%%%%%%%%%%%%%%%%%%

    \hspace{-1.25cm}

    \node[point,label={left:$ $}] (5) at (11.5,1) {};
   
   \node[point,label={left:$ $}] (7) at (11.5,-1) {};
   \node[point,label={right:$ $}] (8) at (10,0) {};
     \node[point,label={right:$ $}] (9) at (8.5,0) {};
        \node[point,label={left:$ $}] (10) at (6.5,.75) {};
 %  \node[point,label={left:$ $}] (11) at (10,.25) {};
   \node[point,label={left:$ $}] (12) at (6.5,-.75) {};

           \draw[decorate sep={.25mm}{2mm},fill] (11.5,.6)--(11.5,-.65);
         \draw[decorate sep={.25mm}{2mm},fill] (6.5,.5)--(6.5,-.65);

    \path[->]  
  (8) edge   [bend left=15] (5)
  (8) edge  [bend right=15]  (7)
  (9) edge  (8)
    (10) edge  [bend left=10]  (9)
  (12) edge [bend right=10]  (9); 
  
     \draw [decorate,
    decoration = {calligraphic brace, raise=15pt}] (11.5,1) --  (11.5,-1)
    node[pos=0.5,right=15pt,black]{\footnotesize $\ell$ sinks}; 
        \draw [decorate,
    decoration = {calligraphic brace, mirror, raise=10pt}] (6.5,.75) --  (6.5,-.75)
    node[pos=0.5,left=10pt,black]{\footnotesize $m+1$ sources}; 

\end{tikzpicture}  \] 
\end{enumerate}
\end{example}

\begin{rmk} \label{dim-wts-river-quiver-rmk} If $\gamma \in \ZZ^{(\Q_{\beta})_0}$ is an integral vector,  then $\I(\gamma) \in \ZZ^{\river_0}$ is the same as $\gamma$ at the vertices of the flags $\F(x_i)$, $i \in [m]$, and $\F(y_j)$, $j \in [\ell]$. Furthermore, we have that
$$
\I(\gamma)(x_0)=(n+1)C, \; \I(\gamma)(y_0)=nC, \text{~and~}\I(\gamma)(x_{m+1})=\ldots=\I(\gamma)(x_{m+n})=C,
$$
where $C:=\sum_{i=1}^m \gamma(x_i)$. Moreover, if $\widehat{\sigma}$ is a weight of $\river$ of the form $\langle \I(\gamma), \cdot \rangle_{\river}$, then 
\begin{enumerate}
\item $\widehat{\sigma}(x_0)=0$ and $\widehat{\sigma}(y_0)=-C$, and
\item $\widehat{\sigma}$ is equal to $\widetilde{\sigma}=\langle \gamma, \cdot \rangle_{\Q_\beta}$ at the vertices of the flags $\F(x_i)$, $i \in [m]$, and $\F(y_j)$, $j \in [\ell]$.
\end{enumerate}
\end{rmk}

\bigskip

\section{Hive-type polytopes for quiver multiplicities}\label{Hive-poly-semi-inv-sec}

\subsection{The irreducible representations of the general linear group} In this section we review the basics of the representation theory of the general linear group, which can be found in \cite{F3}. A \emph{partition} is a sequence $\lambda = (\lambda_1,\ldots, \lambda_r)$ of integers with $\lambda_1 \geq \ldots \geq \lambda_r \geq 0$. The \emph{length} of a partition, denoted by $\ell(\lambda)$, is defined to be the number of its nonzero parts. If $\lambda$ is a partition, we define $|\lambda|$ to be the sum of its parts. The Young diagram of a partition $\lambda$ is a collection of boxes, arranged in left-justified rows with $\lambda_i$ boxes in row $i$. If $a$ and $b$ are two positive integers, $(b^a)$ denotes the partition that has $a$ parts, all equal to $b$. We say that the diagram of $(b^a)$ is the $a \times b$ rectangle.

Now let $N$ be a fixed positive integer. Denote the set of partitions of length at most $N$ by $P_N$. For a partition $\lambda \in P_N$,  $S^\lambda V$ denotes the irreducible (polynomial) representation of $\GL(V)$ with highest weight $\lambda$, called a \emph{Schur module}, where $V$ is any fixed $N$-dimensional complex vector space.  Given partitions $\lambda, \mu, \nu \in P_N$, we define the \emph{Littlewood-Richardson coefficient} $c^\nu_{\lambda, \mu}$ to be the multiplicity of $S^\nu V$ in $S^\lambda V \otimes S^\mu V$, that is,
\[
c^\nu_{\lambda,\mu} = \dim_\CC \left( S^\nu V^* \otimes S^\lambda V \otimes S^\mu V\right)^{\GL(V)}.
\]
More generally, if $\nu, \lambda(1), \ldots, \lambda(r) \in P_N$, we define
\[
c^\nu_{\lambda(1),\ldots, \lambda(r)} = \dim_\CC \left(S^\nu V^* \otimes S^{\lambda(1)} V \otimes \cdots \otimes S^{\lambda(r)} V\right)^{\GL(V)}.
\]

\noindent
Following \cite{Zel-1999}, given partitions $\lambda(1), \ldots, \lambda(r) \in P_N$, we define partitions $\widetilde{\lambda}, \widetilde{\mu} \in P_{rN}$ by
\begin{equation}\label{Zel-trick-eqns}
\widetilde{\mu}_{(j-1)N+i}:=\sum_{k=j+1}^r \lambda_1(k) \text{~and~} \widetilde{\lambda}_{(j-1)N+i}=\lambda_i(j)+\widetilde{\mu}_{(j-1)N+i}, \forall j \in [r], i \in [N].
\end{equation}

\begin{rmk} 
\begin{enumerate}
\item The last $N$ parts of the partition $\widetilde{\mu}$ are zero. Furthermore, $\widetilde{\lambda}- \widetilde{\mu}$ is a skew diagram whose connected components are translates of the diagrams of $\lambda(1),\ldots, \lambda(r)$.

\item We emphasize that if the partitions $\lambda(1), \ldots, \lambda(r)$ have different lengths, we first choose an integer $N \geq 1$ such that $\ell(\lambda(1)), \ldots, \ell(\lambda(r)) \leq N$ and extend each $\lambda(i)$ by adding $N-\ell(\lambda(i))$ zero parts. Then we construct the partitions $\widetilde{\lambda}$ and $\widetilde{\mu}$ according to Equation $(\ref{Zel-trick-eqns})$. This is emphasized in the diagram below by using red vertical lines to indicate that zeros may have been added to the end of the partitions.
\end{enumerate}
\end{rmk}

\noindent
Diagrammatically, these partitions are defined as 

\[
\begin{tikzpicture}[scale=.4]
\coordinate (a) at (0,0);

\coordinate (lr_1) at (1,-12.25);
\coordinate (lr_2) at (1,-13);
\coordinate (lr_2b) at (0,-13);
\coordinate (lr_3) at (0,-10);
\coordinate (lr_4) at (4,-10);
\coordinate (lr_5) at (4,-10.75);
\coordinate (lr_6) at (3,-10.75);
\coordinate (lr_7) at (3,-11.25);
\draw[black,thick] (lr_1)--(lr_2)--(lr_2b)--(lr_3)--(lr_4)--(lr_5)--(lr_6)--(lr_6)--(lr_7);
\coordinate (lr_v) at (0,-14);
\draw[red,thick] (lr_2b) -- (lr_v);

\coordinate (l2_1) at (7.5,-6.5);
\coordinate (l2_1b) at (7.5,-7);
\coordinate (l2_2) at (7,-7);
\coordinate (l2_3) at (7,-4);
\coordinate (l2_4) at (10,-4);
\coordinate (l2_5) at (10,-5);
\coordinate (l2_6) at (8.5,-5);
\coordinate (l2_7) at (8.5,-5.5);
\draw[black,thick] (l2_1)--(l2_1b)--(l2_2)--(l2_3)--(l2_4)--(l2_5)--(l2_6)--(l2_6)--(l2_7);
\coordinate (l2_v) at (7,-8);
\draw[red,thick] (l2_v) -- (l2_2);

\coordinate (l1_1) at (10.75,-1.75);
\coordinate (l1_2) at (10.75,-2.5);
\coordinate (l1_3) at (10,-2.5);
\coordinate (l1_4) at (10,0);
\coordinate (l1_5) at (13,0);
\coordinate (l1_6) at (13,-.75);
\coordinate (l1_7) at (12.5,-.75);
\coordinate (l1_8) at (12.5,-1.25);
\draw[black,thick] (l1_1)--(l1_2)--(l1_3)--(l1_4)--(l1_5)--(l1_6)--(l1_6)--(l1_7)--(l1_8);
\coordinate (l1_v) at (10,-4);
\draw[red,thick] (l1_v)--(l1_3);

\draw[black,thick] (a)--(l1_5);
\draw[black,thick] (a)--(lr_2b);

\node (l) at (-1,-5) {$\widetilde{\lambda}=~$};
\node (mu) at (3,-3) {$\widetilde{\mu}$};
\node (lr) at (1.5,-11) {$\lambda(r)$};
\node (l2) at (8,-4.5) {$\lambda(2)$};
\node (l1) at (11.3,-.7) {$\lambda(1)$};

\draw[decorate sep={.5mm}{2mm},fill] (1.2,-12.1) -- (2.9,-11.3);
\draw[decorate sep={.5mm}{2mm},fill] (4.2,-9.8) -- (6.8,-8);
\draw[decorate sep={.5mm}{1.8mm},fill] (7.7,-6.3) -- (8.3,-5.5);
\draw[decorate sep={.5mm}{2mm},fill] (11.1,-1.75) -- (12.1,-1.3);
\end{tikzpicture}
\]

\bigskip
\begin{prop}\cite[Proposition 9]{Zel-1999}\label{Zelevinsky-trick} Keep the same notations as above. If $\nu, \lambda(1),\ldots, \lambda(r) \in P_N$ are partitions, then 
$$c^\nu_{\lambda(1),\ldots, \lambda(r)} = c^{\widetilde{\lambda}}_{\widetilde{\mu},\nu}.$$
\end{prop}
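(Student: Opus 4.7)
The plan is to reduce the identity to a factorization property of a particular skew Schur function. The construction of $\widetilde\lambda$ and $\widetilde\mu$ packages the $r$ partitions $\lambda(1),\ldots,\lambda(r)$ into a single skew diagram, and I will show that this skew diagram decomposes as a disjoint union of translates of the straight shapes $\lambda(1),\ldots,\lambda(r)$ that occupy pairwise disjoint rows and pairwise disjoint columns. The claim will then fall out of comparing Schur-basis expansions.

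The first step is bookkeeping to verify that $\widetilde\lambda, \widetilde\mu$ are honest partitions of length at most $rN$ with $\widetilde\mu \subseteq \widetilde\lambda$. Setting $C_j := \sum_{k=j+1}^{r}\lambda_1(k)$, the formulas $\widetilde\mu_{(j-1)N+i} = C_j$ and $\widetilde\lambda_{(j-1)N+i} = \lambda_i(j)+C_j$ are weakly decreasing in $i$ within each block of $N$ consecutive rows, and the boundary inequality between blocks $j$ and $j+1$ collapses to the trivial $\lambda_N(j) \geq 0$. The same computation shows that inside the skew diagram $\widetilde\lambda/\widetilde\mu$, the $j$-th block of rows contributes a translate of the Young diagram of $\lambda(j)$ sitting in columns $C_j+1,\ldots, C_j+\lambda_1(j)$. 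These $r$ components are pairwise disjoint in rows (by the block structure) and in columns (component $j$ ends at column $C_{j-1}=C_j+\lambda_1(j)$, while component $j-1$ begins at column $C_{j-1}+1$).

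Given this shape decomposition, the standard fact that a skew Schur function factors across connected components occupying pairwise disjoint rows and columns yields
\[
s_{\widetilde\lambda/\widetilde\mu} \;=\; \prod_{j=1}^{r} s_{\lambda(j)}.
\]
Expanding both sides in the Schur basis, the left-hand side becomes $\sum_\nu c^{\widetilde\lambda}_{\widetilde\mu, \nu}\, s_\nu$ by definition, while iterating $s_\alpha s_\beta = \sum_\gamma c^\gamma_{\alpha,\beta}\, s_\gamma$ on the right-hand side produces $\sum_\nu c^\nu_{\lambda(1),\ldots,\lambda(r)}\, s_\nu$. Matching coefficients of $s_\nu$ for $\nu \in P_N$ gives the identity. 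The only real content is the shape-decomposition step in the second paragraph; everything after is formal manipulation of symmetric functions, so I do not anticipate serious obstacles.
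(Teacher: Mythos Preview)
Your argument is correct and is essentially the standard proof of Zelevinsky's identity. The paper does not supply its own proof of this proposition; it simply cites \cite{Zel-1999}. That said, the paper does record, in the remark immediately preceding the proposition, the key structural fact you rely on: that $\widetilde\lambda/\widetilde\mu$ is a skew diagram whose connected components are translates of the $\lambda(j)$. Your write-up fills in the remaining steps (verifying the column-disjointness and invoking the factorization of skew Schur functions), which the paper leaves implicit in the citation.
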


We end this subsection by listing some very useful properties of the irreducible representations of $\GL(V)$. 

\begin{prop}\phantomsection\label{irr-gln-prop}
\begin{enumerate}[\normalfont(1)]
\item \hypertarget{irr-gln-prop-1}{} Let $\lambda \in P_N$. Then $\left(S^\lambda(V)\right)^{\SL(V)} \neq 0$ if and only if $\dim S^\lambda (V) = 1$ if and only if $\lambda = (w^N)$. In this case, $\left(S^\lambda V\right)^{\SL(V)}$ is spanned by one semi-invariant of weight $w$.
\item  \hypertarget{irr-gln-prop-2}{}  Let $\lambda = (\lambda_1,\ldots, \lambda_N)$ and $\mu = (\mu_1,\ldots, \mu_N)$ be two partitions. Then $\left(S^\lambda V^* \otimes S^\mu V\right)^{\SL(V)} \neq 0$ if and only if $\mu_i-\lambda_i=w$ for all $i \in [N]$ for some integer $w$. If this is the case, $\left(S^\lambda V^* \otimes S^\mu V\right)^{\SL(V)}$ is a one-dimensional vector space spanned by a semi-invariant of weight $w$.
\item \hypertarget{irr-gln-prop-3}{}  Let $U$ be a rational representation of $\GL(V)$. Then $U^{\SL(V)} = \bigoplus_{\theta \in \ZZ} U_\theta$, where
\[
U_\theta = \{u \in U \mid g \cdot u = \det(g)^\theta \cdot u, \; g \in \GL(V)\}
\]
is the space of semi-invariants of weight $\theta$. Moreover, $U_\theta = \left(U \otimes \det^{-\theta}_V\right)^{\GL(V)}$, where $\det^{-\theta}_V : \GL(V) \to \CC^*$ is the one-dimensional representation of $\GL(V)$ that sends $g \in \GL(V)$ to $\det^{-\theta}(g) \in \CC^*$.
\end{enumerate}
\end{prop}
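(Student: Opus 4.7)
The plan is to take part (3) as the foundational statement and then deduce parts (1) and (2) from it, together with standard $\GL(V)$ highest-weight theory. The underlying structural fact is that $\GL(V) = Z \cdot \SL(V)$, where $Z = \CC^* \cdot \Id_V$ is the center and $Z \cap \SL(V)$ is the finite group of $N$th roots of unity, and that the rational characters of $\GL(V)$ trivial on $\SL(V)$ are precisely the integer powers of $\det$.

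For part (3), I would first observe that since $\SL(V)$ is normal in $\GL(V)$, the subspace $U^{\SL(V)}$ is $\GL(V)$-stable, hence a rational representation of the diagonalizable quotient $\GL(V)/\SL(V) \cong \CC^*$, and therefore decomposes into weight spaces under this quotient. Because the character group of the quotient is generated by $\det$, these weight spaces are exactly the $U_\theta$. The identification $U_\theta = (U \otimes \det^{-\theta}_V)^{\GL(V)}$ is then a direct verification: fixing a generator $v$ of the one-dimensional module $\det^{-\theta}_V$, the element $u \otimes v$ is $\GL(V)$-fixed if and only if $g \cdot u = \det(g)^\theta u$ for every $g \in \GL(V)$.

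For part (1), I would use the classical fact that $S^\lambda V$ remains irreducible when restricted to $\SL(V)$, and moreover two such restrictions are isomorphic if and only if the highest weights differ by $(w^N)$. Consequently $(S^\lambda V)^{\SL(V)} \neq 0$ forces $S^\lambda V|_{\SL(V)}$ to be the trivial module, hence one-dimensional, which by the highest-weight classification forces $\lambda = (w^N)$. Conversely $S^{(w^N)} V \cong \det^w_V$, on which $\GL(V)$ acts by the $w$th power of the determinant, giving the asserted semi-invariant weight.

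For part (2), I would rewrite $(S^\lambda V^* \otimes S^\mu V)^{\SL(V)}$ as $\Hom_{\SL(V)}(S^\lambda V, S^\mu V)$ and apply Schur's lemma to the irreducible $\SL(V)$-restrictions. By the classification recalled above, this Hom space vanishes unless $\mu_i - \lambda_i = w$ is constant in $i$, in which case it is one-dimensional. To identify the weight, I would compute the action of the center: the scalar $c \cdot \Id_V$ acts on $S^\lambda V$ by $c^{|\lambda|}$, hence on $S^\lambda V^* \otimes S^\mu V$ by $c^{|\mu|-|\lambda|} = c^{Nw}$, which matches $\det(c \cdot \Id_V)^w$ and yields weight $w$. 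None of the three parts presents a real obstacle beyond careful bookkeeping of the determinant twist; the real content sits in part (3), after which parts (1) and (2) reduce to invoking irreducibility of $S^\lambda V|_{\SL(V)}$ and Schur's lemma.
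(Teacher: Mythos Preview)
The paper does not actually prove this proposition: it is stated without proof as a list of ``very useful properties of the irreducible representations of $\GL(V)$'', with the surrounding subsection pointing to Fulton's book as a general reference. So there is no paper proof to compare against.

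Your outline is correct and is exactly the standard route one would take to justify these facts. Establishing part (3) first via the identification $\GL(V)/\SL(V)\cong\CC^*$ and then deducing (1) and (2) from the irreducibility of $S^\lambda V|_{\SL(V)}$ together with Schur's lemma is the natural order, and your weight computation via the action of the center is the right way to pin down the semi-invariant weight $w$. One small remark: in part (2), checking the central action gives $c^{Nw}$ only tells you the weight modulo the kernel of $Z\to\GL(V)/\SL(V)$; to conclude the weight is exactly $w$ (and not merely congruent to $w$ modulo $N$), it is cleaner to invoke part (3) directly, noting that $S^\mu V\cong S^\lambda V\otimes\det^w_V$ as $\GL(V)$-modules when $\mu_i-\lambda_i=w$, so that $\Hom_{\GL(V)}(S^\lambda V,S^\mu V\otimes\det^{-w}_V)$ is one-dimensional. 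This is implicit in what you wrote but worth making explicit.
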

	
\subsection{Knutson-Tao's hive polytopes for Littlewood-Richardson coefficients}
\label{subsection:hives}
In this subsection we review a combinatorial model for computing Littlewood-Richardson coefficients that was introduced by A. Knutson and T. Tao in \cite{KT} and \cite{KT2}. Further details about this combinatorial description and its consequences can be found in, for instance, \cite{KTT}, \cite{KTT2}, \cite{KTT3}, and \cite{KTT4}.

To define the polytope whose number of lattice points is the Littlewood-Richardson coefficient $c^\nu_{\lambda,\mu}$ for a specific choice of partitions $\nu,\lambda,$ and $\mu$ with at most $N$ parts, we start by considering a triangular graph obtained by dividing an equilateral triangle into $N^2$ smaller equilateral triangles of the same size by plotting $N+1$ vertices along each edge of the large triangle.  

An \emph{$N$-hive} is a tuple of numbers $(e_{i,j},  f_{i,j}, g_{i,j})$ with $0 \leq i,j, i+j \leq N-1$ where the entries $e_{i,j}$ label  the edges parallel to the left boundary of the large triangle, the entries $f_{i,j}$ label the edges parallel to the right boundary of the large triangle, and the entries $g_{i,j}$ label the horizontal edges. Furthermore, these numbers must satisfy the hive conditions $(\ref{hive-ineq-1})-(\ref{hive-ineq-2})$ described below. A hive is said to be an \emph{integral hive} if all of its entries are non-negative integers. A $3$-hive is depicted in Figure \hyperlink{LR_hive}{1} below.

The hive conditions are a set of constraints on the edge labels of each the following two elementary triangles and three elementary rhombi: 

\[
\begin{tikzpicture}

\coordinate [label={left: $\alpha$}] () at (-9.5,0.1);
\coordinate [label={right: $\beta$}] () at (-8.5,0.1);
\coordinate [label={below: $\gamma$}] () at (-9,-.5);
\coordinate [label={below left:}] (a) at (-10,-.5);
\coordinate [label={above left:}] (b) at (-8,-.5);
\coordinate [label={below right:}] (c) at (-9,.5);
\draw (a)--(b)--(c)--cycle;

\coordinate [label={below: $T_1$}] () at (-9,-1.2);

\foreach \i in {a,b,c}
  \fill (\i) circle (2pt);

\coordinate [label={left: $\alpha$}] () at (-6.55,-.1);
\coordinate [label={right: $\beta$}] () at (-5.45,-.1);
\coordinate [label={above: $\gamma$}] () at (-6,.5);
\coordinate [label={below left:}] (a) at (-6,-.5);
\coordinate [label={above left:}] (b) at (-7,.5);
\coordinate [label={below right:}] (c) at (-5,.5);
\draw (a)--(b)--(c)--cycle;

\coordinate [label={below: $T_2$}] () at (-6,-1.2);

\foreach \i in {a,b,c}
  \fill (\i) circle (2pt);
\end{tikzpicture}
\]

\smallskip

\[
\begin{tikzpicture}

\coordinate [label={above: $\delta$}] () at (-2,.5);
\coordinate [label={below: $\beta$}] () at (-3,-.5);
\coordinate [label={left: $\alpha$}] () at (-3.5,0.1);
\coordinate [label={right: $\gamma$}] () at (-1.5,-.2);
\coordinate [label={below left:}] (a) at (-4,-.5);
\coordinate [label={above left:}] (b) at (-3,.5);
\coordinate [label={below right:}] (c) at (-2,-.5);
\coordinate [label={above:}] (d) at (-1,.5);
\draw (a)--(b)--(d)--(c)--cycle;
\draw (b)--(c);

\coordinate [label={below: $R_1$}] () at (-2.4,-1.2);

\foreach \i in {a,b,c,d}
  \fill (\i) circle (2pt);

  \hspace{-.7cm}
  
\coordinate [label={left: $\delta$}] () at (1.5,.6);
\coordinate [label={right: $\alpha$}] () at (2.6,.5);
\coordinate [label={left: $\gamma$}] () at (1.4,-.6);
\coordinate [label={right: $\beta$}] () at (2.5,-.6);
\coordinate [label={above:}] (a) at (2,1);
\coordinate [label={left:}] (b) at (1,0);
\coordinate [label={right:}] (c) at (3,0);
\coordinate [label={below:}] (d) at (2,-1);
\draw (a)--(b)--(d)--(c)--cycle;
\draw (b)--(c);

\foreach \i in {a,b,c,d}
  \fill (\i) circle (2pt);

\coordinate [label={below: $R_2$}] () at (2,-1.2);

 \hspace{-.7cm}
\coordinate [label={left: $\beta$}] () at (5.3,-.1);
\coordinate [label={above: $\alpha$}] () at (6,.5);
\coordinate [label={below: $\gamma$}] () at (7,-.5);
\coordinate [label={right: $\delta$}] () at (7.5,0.2); 
\coordinate [label={above:}] (a) at (5,.5);
\coordinate [label={above right:}] (b) at (7,.5);
\coordinate [label={below: }] (c) at (6,-.5);
\coordinate [label={below right:}] (d) at (8,-.5);
\draw (a)--(b)--(d)--(c)--cycle;
\draw (b)--(c);

\foreach \i in {a,b,c,d}
  \fill (\i) circle (2pt);

  \coordinate [label={below: $R_3$}] () at (6.5,-1.2);
\end{tikzpicture}
\]

In each of the two triangles $T_1$ and $T_2$, we want 
\begin{equation}\label{hive-ineq-1}
\alpha + \beta = \gamma.
\end{equation}
In particular, this implies that in the three rhombi with our labeling, we must have 
\begin{equation}
\alpha + \delta = \beta + \gamma.
\end{equation}
Furthermore, we want the elementary rhombi to satisfy the \emph{rhombus inequalities}, \emph{i.e.},  for each of $R_1, R_2,$ and $R_3$, we want
\begin{equation}\label{hive-ineq-2}
\alpha \geq \gamma \quad \text{ and } \quad \beta \geq \delta,
\end{equation}
where it is clear that either one of the two inequalities in $(\ref{hive-ineq-2})$ implies the other one. Moreover, note that inequalities $(\ref{hive-ineq-1})-(\ref{hive-ineq-2})$ define a convex polyhedral cone in $\RR^{\frac{3N(N+1)}{2}}$. 

\begin{definition}
An \emph{LR-hive} is an integer $N$-hive whose border labels are determined by three partitions $\lambda$, $\mu$, and $\nu$ with at most $N$ non-zero parts such that $|\nu|=|\lambda|+|\mu|$ and
$$
e_{i,0}=\lambda_{i+1}, \;\; f_{j, N-1-j}=\mu_{N-j}, \text{~and~}g_{0,k}=\nu_{k+1}, \;\;  \forall \, 0 \leq i,j,k \leq N-1. 
$$
\end{definition}
%\label{LR_hive}
\begin{figure}[tbp] 
\hypertarget{LR_hive}{}
\begin{center}
\includegraphics[trim=2cm 18cm 2cm 2cm,clip,scale=.9]{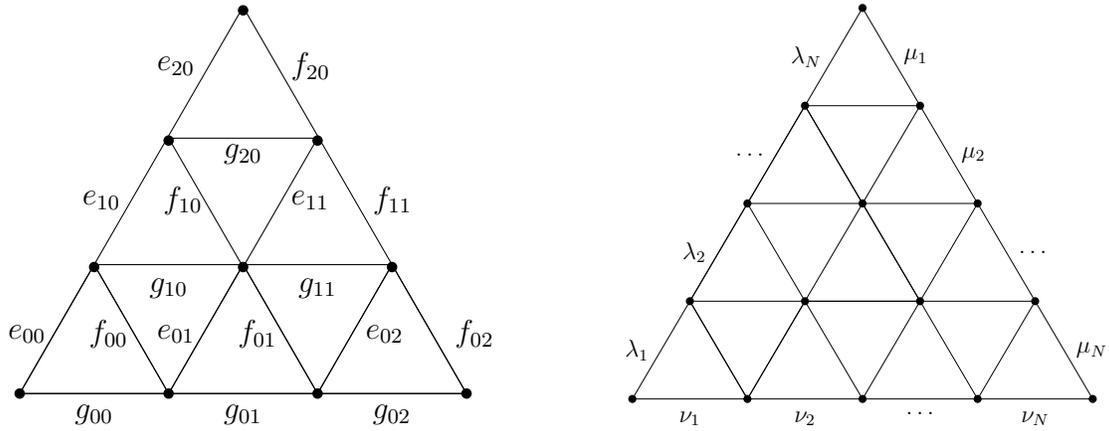}
\caption{Left: The $3$-hive with border labels. Right: Boundary labels determined by partitions $\lambda, \mu, \nu$.}
\end{center}
\end{figure} 

\begin{theorem}[\cite{KT}, Theorem 4]\label{LR-hives-KnuTao-thm}
Let $\lambda, \mu$, and $\nu$ be three partitions with at most $N$ nonzero parts such that $|\nu|=|\lambda|+|\mu|$.  Then the Littlewood-Richardson coefficient $c^\nu_{\lambda, \, \mu}$ is the number of LR-hives with boundary labels determined by $\lambda, \mu$, and $\nu$.
\end{theorem}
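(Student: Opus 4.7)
My plan is to prove Theorem~\ref{LR-hives-KnuTao-thm} by reducing LR-hives to a combinatorial model already known to enumerate Littlewood-Richardson coefficients. The cleanest target is the Berenstein-Zelevinsky (BZ) triangle with boundary $(\lambda,\mu,\nu)$, which sits on essentially the same triangular graph as a hive; an alternative endpoint would be Littlewood-Richardson skew tableaux of shape $\nu/\lambda$ and content $\mu$, but BZ triangles are combinatorially closer to hives and will produce a more transparent bijection.

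First I would repackage an $N$-hive in \emph{node coordinates} by assigning a real number $h_{i,j}$ to each of the $\binom{N+2}{2}$ nodes of the triangular grid, with the edge labels $e_{i,j}, f_{i,j}, g_{i,j}$ recovered as differences of node labels across the corresponding edges. This automatically enforces the triangle equalities $(\ref{hive-ineq-1})$: they just say the edge labels come from a potential. The rhombus inequalities $(\ref{hive-ineq-2})$ then become discrete concavity conditions of the form $h_A + h_C \leq h_B + h_D$ across each elementary rhombus $ABCD$. The boundary conditions become prescriptions that the successive differences of $h$ along the three sides of the outer triangle reproduce the parts of $\lambda$, $\mu$, and $\nu$.

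Next I would write down an explicit integer-affine change of coordinates from hive node labels to the parameters of a BZ triangle with the same boundary. Under this change the hive concavity inequalities should match, one-for-one and with the correct signs, the BZ hexagon inequalities, while the boundary data are preserved. Because this transformation is $\ZZ$-affine, it will restrict to a bijection on lattice points between LR-hives with boundary $(\lambda,\mu,\nu)$ and BZ triangles with the same boundary. The theorem then follows from the Berenstein-Zelevinsky count for BZ triangles.

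The main obstacle will be the last step of the coordinate change: getting the indexing and signs right so that the bijection is exact and $\ZZ$-affine, and translating each of the three rhombus shapes $R_1,R_2,R_3$ into the corresponding BZ hexagon inequality. An alternative plan would pass through the honeycomb dual model: by planar duality, LR-hives correspond to integer honeycombs with prescribed boundary charges, and one would then prove directly that such honeycombs enumerate $c^\nu_{\lambda,\mu}$ via the interpretation of honeycombs as compatibility data for triples of Hermitian matrices whose spectra are $\lambda,\mu,\nu$ and which sum to zero. The duality step is clean, but the honeycomb-counts-LR-coefficients step is essentially equivalent to the theorem itself; so the BZ comparison is my preferred plan, keeping the honeycomb route as a fallback if the explicit coordinate change resists direct computation.
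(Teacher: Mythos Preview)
The paper does not prove this theorem at all: it is quoted verbatim from Knutson and Tao \cite{KT} as an external result and used as a black box in Section~\ref{subsection:hives}. There is therefore no ``paper's own proof'' to compare your proposal against.

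That said, your plan is a reasonable outline of one standard route to the result. Passing to node coordinates $h_{i,j}$ so that the triangle equalities become automatic and the rhombus inequalities become discrete concavity is exactly the right first move, and comparing with Berenstein--Zelevinsky triangles via an explicit $\ZZ$-affine change of variables is a known and workable strategy. Your caveat about the bookkeeping is well placed: the only real content is matching the three rhombus families $R_1,R_2,R_3$ to the BZ inequalities with correct signs and indices, and this is where errors creep in. Your fallback through honeycombs is, as you note, essentially circular unless you supply an independent argument that integer honeycombs count $c^\nu_{\lambda,\mu}$, so the BZ route is the right primary plan.
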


\subsection{Computing weight spaces of semi-invariants via Littlewood-Richardson coefficients} \label{compute-semi-inv-river-sec}
In this section we compute weight spaces of semi-invariants for the quiver $\river$. The computational method we use in this paper has been pioneered by Derksen and Weyman (see for example \cite{DW1, DW2}) who used it to great effect to prove the Saturation Conjecture for Littlewood-Richardson coefficients as a consequence of their more general Saturation Property for quiver semi-invariants. 

Let $m,n$, and $\ell$ be positive integers and let $\Q$ be the $n$-complete bipartite quiver with source vertices $x_1, \ldots, x_m$, and sink vertices $y_1, \ldots, y_{\ell}$. Let $\beta$ be a sincere dimension vector of $\Q$.

Let $\river$ be the quiver introduced in Section \ref{qes-embedding-sec}. Our goal in this section is to find a hive-type polytopal description for the weight spaces of semi-invariants for the quiver set-up $(\river, \widehat{\beta})$, where $\widehat{\beta}=\I(\widetilde{\beta})$ with $\beta$ being a sincere dimension vector of $\Q$ and $\widetilde{\beta}$  its extension to $\Q_{\beta}$. More precisely, we have that $\widehat{\beta}$ is the dimension vector of $\river$ given by:
\begin{itemize}
\item $\widehat{\beta}(x_0)=(n+1)d$, $\widehat{\beta}(y_0)=nd$, and $\widehat{\beta}(x_{m+k})=d$ for all $k \in [n]$, where 
$$
d:=\sum_{i=1}^m \beta(x_i);
$$

\item traversing the flag $\F(x_i)$ going into the vertex $x_i$ from left to right, the values of $\widehat{\beta}$ at the vertices of this flag  are $1, 2, \ldots, \beta(x_i)$ for every $i \in [m]$;\\

 \item traversing the flag $\F(y_j)$ going out of the vertex $y_j$ from left to right, the values of $\widehat{\beta}$ at the vertices of this flag  are $\beta(y_j), \ldots, 2, 1$ for every $j \in [\ell]$.
\end{itemize} 

Next, let $\widehat{\sigma}$ be a weight of $\river$ such that $\widehat{\sigma} \cdot \widehat{\beta}=0$. Furthermore, we assume that:
\begin{equation}\label{eqn1-sigma-hat}
\restr{\widehat{\sigma}}{\F(x_i)} \geq 0, \, i \in [m], \text{~and~} \restr{\widehat{\sigma}}{\F(y_j)} \leq 0, \, j \in [\ell],
\end{equation}
and
\begin{equation}\label{eqn2-sigma-hat}
\widehat{\sigma}(x_0)=0, \, \text{~and~} \widehat{\sigma}(x_{m+k})=-\widehat{\sigma}(y_0) \geq 0, \, \forall k \in [n].
\end{equation}

\noindent
For each $i \in [m]$, let us label the vertices of the flag $\F(x_i)$  of the quiver $\river$ as follows
$$ 
\F(x_i) : \begin{array}{c}\\{\bullet} \\ i_1 \end{array}  \rightarrow \begin{array}{c}\\{\bullet} \\ i_2 \end{array} \rightarrow \cdots \begin{array}{c}\\{\bullet} \\ i_{\beta(x_i)-1} \end{array} \longrightarrow \hspace{-6pt} \begin{array}{c} x_i\\ \bullet \\i_{\beta(x_i)} \end{array}
$$
and define the partition 
\begin{equation}\label{defn-lambda-sigma-hat-eqn1}
\lambda(i)=\left( \sum_{k \leq r \leq \beta(x_i)} \widehat{\sigma}(i_r) \right)_{k \in [\beta(x_i)]} \in P_{\beta(x_i)}
\end{equation}

For each $j \in [\ell]$, let us label the vertices of the flag $\F(y_j)$ of the quiver $\river$ as follows
$$
\F(y_j) : \begin{array}{c}y_j\\ \bullet \\ j_{\beta(y_j)} \end{array} \hspace{-6pt} \longrightarrow \begin{array}{c}\\{\bullet} \\ j_{\beta(y_j)-1} \end{array} \longrightarrow \cdots \longrightarrow \begin{array}{c}\\{\bullet} \\ j_2 \end{array}  \longrightarrow \begin{array}{c}\\{\bullet} \\ j_1 \end{array}
$$
and define the partition 
\begin{equation}\label{defn-nu-sigma-hat-eqn2}
\nu(j)=\left( -\sum_{k \leq r \leq \beta(y_j)} \widehat{\sigma}(j_r) \right)_{k \in [\beta(y_j)]} \in P_{\beta(y_j)}
\end{equation}

We point out that since $\widehat{\sigma}\cdot \widehat{\beta}=0$ we have that
$$
\sum_{i=1}^m |\lambda(i)|=\sum_{j=1}^{\ell} |\nu(j)|.
$$

\begin{prop} \label{main-compute-semi-inv-LR-prop} Let $\widehat{\sigma}$ be a weight of $\river$ with $\widehat{\sigma} \cdot \widehat{\beta}=0$, and such that $\widehat{\sigma}$ satisfies $(\ref{eqn1-sigma-hat})$ and $(\ref{eqn2-sigma-hat})$.  

Then the following formula holds:
\begin{equation}\label{main-compute-semi-inv-LR-eqn}
\dim \SI(\river, \widehat{\beta})_{\widehat{\sigma}} = \sum_{\substack{\mu\\ \ell(\mu)\leq nd}} c^\mu_{\lambda(1),\ldots, \lambda(m), \underbrace{(f^d),\ldots, (f^d)}_{n \text{ times }}} \cdot c^\mu_{\nu(1),\ldots, \nu(\ell),(f^{nd})},
\end{equation} 
where $f=-\widehat{\sigma}(y_0)$.
\end{prop}

\begin{proof} To find the desired formula for $\dim \SI(\river, \widehat{\beta})_{\widehat{\sigma}}$, we proceed as follows. First, we use Cauchy's formula to decompose $ \dsp \CC[\rep(\river, \widehat{\beta})]$ into a direct sum of irreducible representations of $\GL(\widehat{\beta})$. Then, we consider the ring of semi-invariants $\dsp \SI(\river, \widehat{\beta}) = \CC[\rep(\river,\widehat{\beta})]^{\SL(\hb)}$ and sort out those semi-invariants that have weight $\widehat{\sigma}$.

For each $i \in [m]$, let us focus on the following subquiver of $\river$:

\begingroup\makeatletter\def\f@size{9.5}\check@mathfonts
\[
\vcenter{\hbox{  
\begin{tikzpicture}[point/.style={shape=circle, fill=black, scale=.3pt,outer sep=3pt},>=latex, decoration=snake]
   \node[point,label={below:$x_0$}] (1) at (-1.5,0) {};
        \node[point,label={above:$x_{i}$}](3) at (-3.5,1) {};
           \node[point,label={}](4) at (-4.5,1) {};
              \node[point,label={}](6) at (-6.5,1) {};
  
        \draw[dotted] (-5.3,1)--(-4.7, 1);
       
    \path[->]  
    (4) edge (3)
    (6) edge (-5.5,1)
    (3) edge node[above] {$a_i$}  (1);  
    
 \draw[B] (-6.5,1.3) -- node[below=10mm] {$\F(x_i)$} (-3.5,1.3);
\end{tikzpicture} 
}}\]
\endgroup

For convenience, let us denote $\beta(x_i) = r$ and write $ V_k = \CC^k$, $\forall \, 1 \leq k \leq r$, and $V = \CC^{\hb(x_0)} = \CC^{(n+1)d}$. Then the contribution of the subquiver above to $\CC[\rep(\river, \hb)]$ is: 
\begin{align*}
& \CC\left[\prod_{k=1}^{r-1} \Hom(V_k, V_{k+1}) \times \Hom(V_r, V) \right] \\
& = \bigotimes_{k=1}^{r-1} S\left(V_k \otimes V_{k+1}^* \right) \otimes S(V_r \otimes V^*) \\ 
& = \bigoplus_{\gamma(1),\ldots, \gamma(r-1), \gamma(i)} S^{\gamma(1)}(V_1) \otimes \bigotimes_{k=2}^{r-1} \left( S^{\gamma(k-1)} V_k^* \otimes S^{\gamma(k)} V_k \right ) \otimes \left ( S^{\gamma(r-1)} V_r^* \otimes S^{\gamma(i)} V_r \right) \otimes S^{\gamma(i)} V^* 
\end{align*} 

This yields the following contribution of the vertices of the flag $\F(x_i)$ to $\SI(\river, \hb)$: 
\[ \bigoplus_{\gamma(1),\ldots, \gamma(r-1), \gamma(i)} \left (S^{\gamma(1)} V_1 \right)^{\SL(V_1)} \otimes \bigotimes_{k=2}^{r-1} \left ( S^{\gamma(k-1)} V_k^* \otimes S^{\gamma(k)} V_k \right)^{\SL(V_k)} \otimes \left ( S^{\gamma(r-1)} V_r^* \otimes S^{\gamma(i)} V_r \right)^{\SL(V_r)} \]

Sorting out those semi-invariants of weight $\widehat{\sigma}$ completely determines the partitions $\gamma(1), \ldots, \gamma(r-1)$, and $\gamma(i)$. By Proposition \hyperlink{irr-gln-prop-1}{\ref{irr-gln-prop}(1)}, we have that $\dsp \left ( S^{\gamma(1)}  V_1 \right)^{\SL(V_1)} \neq 0$ if and only if it is one-dimensional. If this is the case, then $\gamma(1)$ is a $1 \times w$ rectangle with $w \in \NN$ and $\dsp \left ( S^{\gamma(1)} V_1 \right)^{\SL(V_1)}$ is spanned by a semi-invariant of weight $w$. Thus, $\left(S^{\gamma(1)} V_1\right)^{\SL(V_1)}$ contains a semi-invariant of weight $\wsi{i_1}$ if and only if $\gamma(1) = (\wsi{i_1})$. 

Next, using Proposition \hyperlink{irr-gln-prop-2}{\ref{irr-gln-prop}(2)}, we have that the space $\left(S^{\gamma(1)}V_2^* \otimes S^{\gamma(2)}_2\right)^{\SL(V_2)}$ is nonzero if and only if it is one-dimensional. If that is the case, then $\gamma(2)$ is $\gamma(1)$ plus some extra columns of height $2$, with the number of these extra columns equaling the weight of the semi-invariant spanning $\left(S^{\gamma(1)}V_2^* \otimes S^{\gamma(2)}_2\right)^{\SL(V_2)}$. Thus, this space contains a nonzero semi-invariant of weight $\wsi{i_2}$ if and only if $\gamma(2) = (\wsi{i_2} + \wsi{i_1}, \wsi{i_2})$. Continuing with this reasoning, we see that $\gamma(1), \ldots, \gamma(r-1)$, and $\gamma(i)$ are completely determined by $\widehat{\sigma}$ with 
\[
\gamma(i) = (\wsi{x_i} + \wsi{i_{r-1}}+\ldots + \wsi{i_1}, \ldots, \wsi{x_i}),
\]
which is precisely $\lambda(i)$. Now, let us focus on vertex $x_0$ and its neighbors:

\begingroup\makeatletter\def\f@size{9.5}\check@mathfonts
$$
\vcenter{\hbox{  
\begin{tikzpicture}[point/.style={shape=circle, fill=black, scale=.3pt,outer sep=3pt},>=latex, decoration=snake]
   \node[point,label={below:$y_0$}] (0) at (1.5,0) {};
   \node[point,label={above:$x_0$}] (1) at (-1.5,0) {};

       \node[point,label={below:$x_{1}$}](2)  at (-3.5,2.5) {};
        \node[point,label={below:$x_{2}$}](3) at (-3.5,1.75) {};
      \node[point,label={above:$x_{m}$}](4) at (-3.5, -.25) {};
       \node[point,label={below:$x_{m+1}$}](5) at (-3.5, -.65) {};
     \node[point,label={below:$x_{m+n}$}](6) at (-3.5, - 2.5){};

    \path[->]  

  (1) edge node[above] {} (0)  
   (2) edge  node[above] {}  (1) 
    (3) edge node[above] {}  (1) 
     (4) edge node[above] {}   (1) 
      (5) edge node[below] {}  (1) 
      (6) edge  node[below, xshift=1em] {} (1);
   
         \draw[decorate sep={.25mm}{2mm},fill] (-3.5, 1.2)--(-3.5,.2);     
         \draw[decorate sep={.25mm}{2mm},fill] (-3.5, -1.2)--(-3.5,-2.1); 

\end{tikzpicture} 
}}
$$
\endgroup

We write $W = \CC^{\widehat{\beta}(y_0)} = \CC^{nd}$. The contribution of this subquiver to $\CC[\rep(\river,\widehat{\beta})]$ is 

\[
\CC[\Hom(V_1,V) \times \cdots \times \Hom(V_{m+n},V) \times \Hom(V,W)] = S(V_1\otimes V^*) \otimes \cdots \otimes S(V_{m+n} \otimes V^*) \otimes S(V\otimes W^*).
\]

Using Cauchy's Formula again, we can write
\begin{equation}
S(V\otimes W^*)=\bigoplus S^{\mu}(V)\otimes S^{\mu}(W^*),
\end{equation}
where the sum is over all partitions $\mu$ of length at most $\min \{\dim V, \dim W \}=nd$. Since the weight $\widehat{\sigma}$ is zero at vertex $x_0$, the calculations above together with Proposition \hyperlink{irr-gln-prop-3}{\ref{irr-gln-prop}(3)} show that the contribution of $x_0$ to $\SI(\river,\widehat{\beta})_{\widehat{\sigma}}$ is made of spaces of the form

\[
\left(S^{\lambda(1)} V^* \otimes \cdots \otimes S^{\lambda(m)} V^* \otimes \underbrace{S^{(f^d)} V^* \otimes \cdots \otimes S^{(f^d)} V^*}_{n \text{ times}} \otimes S^\mu V\right)^{\GL(V)},
\]
with $\mu$ a partition of length at most $nd$.

Taking into account the contributions of all the other vertices of $\river$, we get that $\SI(\river, \widehat{\beta})_{\widehat{\sigma}}$ is isomorphic to 

\begin{multline*}
\bigoplus_{\substack{\mu \\ \ell(\mu) \leq nd}} \left(S^{\lambda(1)} V^* \otimes \cdots \otimes S^{\lambda(m)} V^* \otimes \underbrace{S^{(f^d)} V^* \otimes \cdots \otimes S^{(f^d)} V^*}_{n \text{ times}} \otimes S^\mu V\right)^{\GL(V)}\otimes \\
\otimes \left(S^{\nu(1)} W \otimes \cdots S^{\nu(\ell)} W \otimes S^{\mu} W^* \otimes \mathrm{det}^f_W\right)^{\GL(W)}.
\end{multline*}
Thus, we conclude that 
\[
\dim \SI(\river, \widehat{\beta})_{\widehat{\sigma}} = \sum_{\substack{\mu\\ \ell(\mu)\leq nd}} c^\mu_{\lambda(1),\ldots, \lambda(m), \underbrace{(f^d),\ldots, (f^d)}_{n \text{ times }}} \cdot c^\mu_{\nu(1),\ldots, \nu(\ell),(f^{nd})}.
\]
\end{proof}

\begin{remark}\label{main-compute-one-sink-rmk} We point out that when $\ell=1$, \emph{i.e.}, $\Q$ has only one sink vertex and thus $\river$ is a star quiver, the right hand side of $(\ref{main-compute-semi-inv-LR-eqn})$ can be simplified down to one multiple Littlewood-Richardson coefficient. Indeed, for a partition $\mu$ with $\ell(\mu) \leq nd$, we have $c^{\mu}_{\nu(1), (f^{nd})}\neq 0$ if and only if $(S^{\mu}(W)^*\otimes S^{\nu(1)}(W)\otimes \det_W^f)^{\GL(W)}\neq 0$, where $W=\CC^{nd}$.  By Proposition \hyperlink{irr-gln-prop-3}{\ref{irr-gln-prop}(3)}, this is further equivalent to saying that the weight space of weight $-f$ that occurs in the weight space decomposition of $(S^{\mu}(W)^*\otimes S^{\nu(1)}(W))^{\SL(W)}$ is not zero. Finally, using Proposition \hyperlink{irr-gln-prop-3}{\ref{irr-gln-prop}(2)} , we see that this is equivalent to $\mu$ being equal to $\nu(1)$ plus $f$ columns of length $nd$ and $c^{\mu}_{\nu(1), (f^{nd})}=1$.  Thus, we get that
$$
\dim \SI(\river, \widehat{\beta})_{\widehat{\sigma}}=c^{\nu(1)+(f^{nd})}_{\lambda(1), \ldots, \lambda(m), \underbrace{ (f^d),\ldots, (f^d)}_{n \text{ times }}}.
$$
This can be further expressed as a single Littlewood-Richardson coefficient via Proposition \ref{Zelevinsky-trick}. 
\end{remark}

With Proposition \ref{main-compute-semi-inv-LR-prop} at our disposal, we are ready to establish the following formula for the multiplicities $K^{\beta}_{\unlm}$. 

\begin{theorem} \label{main-formula-mult-thm} Let $\Q$ be an $n$-complete bipartite quiver with source vertices $x_1, \ldots, x_m$ and sink vertices $y_1, \ldots, y_\ell$ and let $\beta=(\beta(x))_{x \in Q_0}$ be a sincere dimension vector of $\Q$. 

Let $\unlm=(\lm(x_i), -\lm(y_j))_{i \in [m], j \in [\ell] }$ be a tuple of sequences with $\lm(x_i)$ a partition of length at most $\beta(x_i)$ and $\lm(y_j)$ a partition of length at most $\beta(y_j)$ such that
$$
\sum_{i=1}^m |\lambda(x_i)|=\sum_{j=1}^{\ell} |\lambda(y_j)|.
$$

Then
\begin{equation}\label{main-formula-mult}
K^{\beta}_{\unlm}=\sum_{\substack{\mu\\ \ell(\mu)\leq nd}} c^\mu_{\lambda(x_1),\ldots, \lambda(x_m), \underbrace{(f^d),\ldots, (f^d)}_{n \text{ times }}} \cdot c^\mu_{\lm(y_1),\ldots, \lm(y_\ell),(f^{nd})},
\end{equation}
where 
$$
d=\sum_{i \in [m]}\beta(x_i) \text{~and~} f=\sum_{i \in [m]} \lambda_1(x_i), \text{~the sum of the largest parts of the partitions} \lm(x_i).
$$
\end{theorem}

\begin{proof} From the tuple $\unlm=(\lm(x_i), -\lm(y_j))_{i \in [m], j \in [\ell]}$, we can construct the following weight $\widetilde{\sigma}_{\unlm}$ of $\Q_\beta$.   If $x$ is a source vertex of $\Q$, the values of $\widetilde{\sigma}_{\unlm}$ along the $\beta(x)$ vertices of the flag 
\[
\F(x) :  \bullet \rightarrow \bullet \rightarrow \cdots \bullet \rightarrow \hspace{-6pt} \begin{array}{c}\\ \bullet \\ x \end{array}
\]
are 
\begin{equation} \label{wt-part-eqn1}
\lambda_1(x) - \lambda_2(x), \, \ldots, \, \lambda_{\beta(x)-1}(x) - \lambda_{\beta(x)}(x), \, \lambda_{\beta(x)}(x).
\end{equation}

If, instead, $y$ is a sink vertex of $\Q$, the values of $\widetilde{\sigma}_{\unlm}$ along the $\beta(y)$ vertices of the flag
\[
\F(x) : \begin{array}{c}\\ \bullet \\ y \end{array} \hspace{-6pt} \rightarrow \bullet \rightarrow \cdots \bullet \rightarrow \bullet
\]
are 
\begin{equation}\label{wt-part-eqn2}
-\lambda_{\beta(y)}(y), \,  \lambda_{\beta(y)}(y) - \lambda_{\beta(y)-1}(y), \, \ldots, \, \lambda_2(y) - \lambda_1(y).
\end{equation}

Using the same methodology as in the proof of Proposition \ref{main-compute-semi-inv-LR-prop}, we can express both $\dim \SI(\Q_\beta, \widetilde{\beta})_{\widetilde{\sigma}_{\unlm}}$ and $K^{\beta}_{\unlm}$ in terms of sums of products of multiple Littelwood-Richardson coefficients. Specifically, we obtain that  
\begin{equation}\label{eqn-mult-wt-semi-inv}
\dim \SI(\Q_\beta, \widetilde{\beta})_{\widetilde{\sigma}_{\unlm}}=K^{\beta}_{\unlm}=\sum_{\mu_{i,j}^{(r)}}\ \  \prod_{i=1}^m c^{\lambda(x_i)}_{\mu_{i,1}^{(1)}, \ldots, \mu_{i,1}^{(n)}, \ldots, \mu_{i,\ell}^{(1)}, \ldots, \mu_{i,\ell}^{(n)}} \cdot \prod_{j=1}^{\ell} c^{\lambda(y_j)}_{\mu_{1,j}^{(1)}, \ldots, \mu_{1,j}^{(n)}, \ldots, \mu_{m,j}^{(1)}, \ldots, \mu_{m,j}^{(n)}},
\end{equation}
where the sum is over all partitions $\mu^{(r)}_{i,j}$, $i \in [m]$, $j \in [\ell]$, $r \in [n]$, with $\ell(\mu^{(r)}_{i,j}) \leq \min\{\beta(x_i), \beta(y_j)\}$.

\smallskip
Our goal is to simplify this complex formula. We start by expressing $\widetilde{\sigma}_{\unlm}$ as $\langle \alpha, \cdot \rangle_{\Q_\beta}$, and then consider the weight $\widehat{\sigma}:=\langle \I(\alpha), \cdot \rangle_{\river}$ of $\river$.  By construction, we have that $\restr{\widetilde{\sigma}_{\unlm}}{\F(x_i)} \geq 0, \, \forall i \in [m]$, and $\restr{\widetilde{\sigma}_{\unlm}}{\F(y_j)} \leq 0, \, \forall j \in [\ell]$. Also, it is immediate to see $\alpha(x_i)=\lm_1(x_i)$, $\forall i \in [m]$, and so 
$$
\widehat{\sigma}(x_{m+1})=\ldots=\widehat{\sigma}(x_{m+1}) =-\widehat{\sigma}(y_0)=\sum_{i \in [m]} \lambda_1(i) \geq 0,
$$
by Remark \ref{dim-wts-river-quiver-rmk}. Thus, the weight $\widehat{\sigma}$ satisfies $(\ref{eqn1-sigma-hat})$ and $(\ref{eqn2-sigma-hat})$. Furthermore, it follows from $(\ref{wt-part-eqn1})$ and $(\ref{wt-part-eqn2})$ that $\lambda(x_i)$ and $\lambda(y_j)$ are precisely the partitions $\lambda(i)$ and $\nu(j)$ from $(\ref{defn-lambda-sigma-hat-eqn1})$ and $(\ref{defn-nu-sigma-hat-eqn2})$, respectively.

Next we claim that
\begin{equation}\label{eqn-embed-1}
\dim \SI(\Q_\beta, \widetilde{\beta})_{\widetilde{\sigma}_{\unlm}}=\dim \SI(\river, \widehat{\beta})_{\widehat{\sigma}}.
\end{equation}
Indeed, we can see via Remark \ref{dim-wts-river-quiver-rmk} that $\alpha$ is a dimension vector of $\Q_\beta$ if and only if $\I(\alpha)$ is a dimension vector of $\river$. If $\alpha$ is a dimension vector then the Embedding Theorem \ref{thm:embedding-semi-invariants} yields $(\ref{eqn-embed-1})$.  Otherwise, both quantities in $(\ref{eqn-embed-1})$ are equal to zero by Proposition \ref{wt-dim-vector-prop}.

Finally, it follows from $(\ref{eqn-mult-wt-semi-inv})$, $(\ref{eqn-embed-1})$, and Proposition \ref{main-compute-semi-inv-LR-prop} that
$$
K^{\beta}_{\unlm} = \sum_{\substack{\mu\\ \ell(\mu)\leq nd}} c^\mu_{\lambda(x_1),\ldots, \lambda(x_m), \underbrace{(f^d),\ldots, (f^d)}_{n \text{ times }}} \cdot c^\mu_{\lm(y_1),\ldots, \lm(y_\ell),(f^{nd})},
$$
where $d=\sum_{i\in [m]} \beta(x_i)$ and $f=\sum_{i \in [m]} \lambda_1(x_i)$. This completes the proof.
\end{proof}

\begin{remark}\phantomsection\label{rmk-direct-compute-vs-river-quiver}
As indicated in formula $(\ref{eqn-mult-wt-semi-inv})$ above, one can compute $K^{\beta}_{\unlm}$ directly (without embedding $\Q_\beta$ into $\river$) in terms of Littlewood-Richardson coefficients. The problem with this direct approach is that it computes $K^{\beta}_{\unlm}$ as a sum over $l  m  n$ variable partitions $\mu_{i,j}^{(r)}$,  $i\in [m], j \in [l], r \in [n[$, where each term of the sum is a product of $m l$ multiple Littlewood-Richardson coefficients. The result is very difficult to work with, making our approach based on quiver exceptional sequences and the quiver $\river$ essential for our purposes. 
\end{remark}

As a consequence of Theorem \ref{main-formula-mult-thm}, we obtain the following interesting combinatorial identity.

\begin{corollary} Let $d$ and $n$ be two positive integers and let $\lambda = (\lambda_1,\ldots, \lambda_d)$ and $\nu = (\nu_1,\ldots, \nu_d)$ be two partitions of length at most $d$. Then
\begin{equation} \label{formula-semi-inv-n-Kronecker}
\sum_{\mu(1), \ldots, \mu(n)} c^{\lambda}_{\mu(1), \ldots, \mu(n)} \cdot  c^{\nu}_{\mu(1), \ldots, \mu(n)}=c^{\nu+(\lambda_1^{nd})}_{\lambda, \underbrace{ (\lambda_1^d),\ldots, (\lambda_1^d)}_{n \text{ times }}},
\end{equation}
where the sum on the left hand side is over all partitions $\mu(1), \ldots, \mu(n)$ of length at most $d$.
\end{corollary}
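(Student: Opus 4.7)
The plan is to interpret both sides of identity $(\ref{formula-semi-inv-n-Kronecker})$ as two different calculations of the same number $\dim \SI(\Q_\beta,\widetilde{\beta})_{\widetilde{\sigma}}$, carried out on the $n$-Kronecker quiver $\Q$ (the $n$-complete bipartite quiver with $m=\ell=1$) equipped with the dimension vector $\beta(x_1)=\beta(y_1)=d$. Build $\widetilde{\sigma}$ from the two partitions by taking first differences along the flags: $\widetilde{\sigma}(i_k)=\lambda_k-\lambda_{k+1}$ for $k<d$ and $\widetilde{\sigma}(x_1)=\lambda_d$, so that $\lambda(1)=\lambda$; symmetrically on the sink flag so that $\nu(1)=\nu$. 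One checks $\widetilde{\sigma}\cdot\widetilde{\beta}=|\lambda|-|\nu|$, which we may assume equals zero (otherwise both sides of the identity vanish for trivial degree reasons). The role of the hypothesis $n\lambda_1\geq \nu_1$ is then to guarantee that the unique $\alpha \in \ZZ^{(\Q_\beta)_0}$ with $\widetilde{\sigma}=\langle \alpha,\cdot\rangle_{\Q_\beta}$ is a genuine dimension vector: unwinding the Euler form along the source flag gives $\alpha(i_k)=\lambda_1-\lambda_{k+1}$ and $\alpha(x_1)=\lambda_1$ (all non-negative since $\lambda$ is a partition), and propagating through the $n$ parallel arrows $x_1\to y_1$ yields $\alpha(y_1)=n\lambda_1-\nu_d$ and $\alpha(j_k)=n\lambda_1-\nu_k$ along the sink flag; the assumption $n\lambda_1 \geq \nu_1$ is exactly what makes $\alpha(j_1)$, the most restrictive of these, non-negative. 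By Remark \ref{dim-wts-river-quiver-rmk}, the associated weight $\widehat{\sigma}=\langle \I(\alpha),\cdot\rangle_{\river}$ on $\river$ then has $f=\lambda_1$.

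With $\widetilde{\sigma}$ in hand, I would compute $\dim \SI(\Q_\beta,\widetilde{\beta})_{\widetilde{\sigma}}$ in two ways. On one hand, a direct Cauchy-decomposition argument on $\Q_\beta$, modelled on the proof of Proposition \ref{main-compute-semi-inv-LR-prop}, yields the LHS: the source flag contribution, handled by Proposition \ref{irr-gln-prop}, pins down the Schur module at $x_1$ to be $S^{\lambda} V(x_1)$ and analogously pins down $S^{\nu}V(y_1)^*$ at $y_1$; meanwhile each of the $n$ parallel arrows $x_1\to y_1$ contributes its own independent Cauchy sum $\bigoplus_{\mu(r)} S^{\mu(r)} V(x_1)^* \otimes S^{\mu(r)} V(y_1)$, introducing $n$ independent summation partitions. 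Taking $\GL(V(x_1)) \times \GL(V(y_1))$-invariants produces precisely $\sum c^{\lambda}_{\mu(1),\ldots,\mu(n)}\, c^{\nu}_{\mu(1),\ldots,\mu(n)}$, the LHS of $(\ref{formula-semi-inv-n-Kronecker})$; this is the $m=\ell=1$ special case of the formula in Remark \hyperlink{complicatedformula}{\ref{rmk-direct-compute-vs-river-quiver}(2)}.

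On the other hand, the Embedding Theorem (Theorem \ref{thm:embedding-semi-invariants}) combined with Proposition \ref{prop-embed-qes} identifies $\dim \SI(\Q_\beta,\widetilde{\beta})_{\widetilde{\sigma}} = \dim \SI(\river,\widehat{\beta})_{\widehat{\sigma}}$, and then Remark \ref{main-compute-one-sink-rmk} (applicable since $\ell=1$) collapses Proposition \ref{main-compute-semi-inv-LR-prop} to the single multiple Littlewood-Richardson coefficient $c^{\nu+(\lambda_1^{nd})}_{\lambda,(\lambda_1^d),\ldots,(\lambda_1^d)}$ with $n$ copies of $(\lambda_1^d)$, which is the RHS. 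Equating the two computations gives $(\ref{formula-semi-inv-n-Kronecker})$. The main obstacle is bookkeeping rather than conceptual: one must carefully track how the same Cauchy data is reorganized on the two sides, with the $n$ parallel arrows of $\Q_\beta$ producing $n$ independent partitions $\mu(r)$, one per arrow, versus the single summation partition $\mu$ channelled through the hub vertex $y_0$ in $\river$; the Embedding Theorem is precisely what guarantees the equality of the two sums.
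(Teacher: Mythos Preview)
Your proposal is correct and follows essentially the same approach as the paper: both compute $\dim \SI(\Q_\beta,\widetilde{\beta})_{\widetilde{\sigma}}$ for the $n$-Kronecker quiver with $\beta=(d,d)$ in two ways---directly via Cauchy on $\Q_\beta$ to obtain the LHS, and via the embedding into $\river$ together with Remark~\ref{main-compute-one-sink-rmk} to obtain the RHS. Your treatment is in fact slightly more careful than the paper's, explicitly identifying the role of the hypothesis $n\lambda_1\geq\nu_1$ (ensuring $\alpha$ is a genuine dimension vector so that the Embedding Theorem applies) and disposing of the degenerate case $|\lambda|\neq|\nu|$, both of which the paper leaves implicit.
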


\begin{proof}
Let $\Q$ be the $n$-Kronecker quiver 
\[
\begin{tikzpicture}[point/.style={shape=circle, fill=black, scale=.3pt,outer sep=3pt},>=latex]
   \node[point,label={below:$x_1$}] (1) at (-2.5,0) {};
   \node[point,label={below:$y_1$}] (4) at (-.5,0) {};

         \draw[decorate sep={.25mm}{2mm},fill] (-1.5,.45)--(-1.5,-.45);       
           
    \path[->]  
  (1) edge  [bend left=60] (4);
  
      \path[->]  
  (1) edge  [bend right=60] (4);
  
    \node at (-1.5,.75) { \footnotesize $n$ arrows}; 
    \end{tikzpicture}
    \]
and let $\beta = (d,d)$.  Then it follows from formula $(\ref{eqn-mult-wt-semi-inv})$ that the left hand side of $(\ref{formula-semi-inv-n-Kronecker})$ is precisely $K^{\beta}_{(\lambda, -\nu)}$. The identity now follows from Theorem \ref{main-formula-mult-thm} and Remark \ref{main-compute-one-sink-rmk}.
\end{proof}

\begin{rmk} If $\lambda=\nu=(x^d)$ for some non-negative integer $x$, then the left hand side of $(\ref{formula-semi-inv-n-Kronecker})$ is precisely $\dim \SI(\Q, (d,d))_{(x,-x)}$ where $\Q$ is the $n$-Kronecker quiver. In this case, our corollary shows that $\dim \SI(\Q, (d,d))_{(x, -x)}$ is a parabolic Kostka coefficient.
\end{rmk}

\subsection{Hive-type polytopes for quiver multiplicities} \label{hives-semi-invs-section}
Our goal in this subsection is to find a polytopal description for constants of the form
\[
K_{\unlm, f}(d,n) := \sum_{\substack{\mu \\ \ell(\mu) \leq nd}} c^\mu_{\lm(x_1),\ldots, \lm(x_m), \underbrace{(f^d),\ldots, (f^d)}_{n \text{ times}}} \cdot c^\mu_{\lm(y_1),\ldots, \lm(y_\ell),(f^{nd})}
\]
where $f, d, \ell, m,n$ are fixed positive integers and $\lm(x_1),\ldots, \lm(x_m), \lm(y_1),\ldots, \lm(y_\ell)$ are fixed partitions such that $\sum_{i=1}^m |\lm(x_i)| = \sum_{j=1}^\ell |\lm(y_j)|$. As we have seen in Section \ref{subsection:hives}, these types of structure constants occur as our multiplicities $K^{\beta}_{\unlm}$.

We begin by applying Proposition \ref{Zelevinsky-trick} to the terms of the sum in the definition of $K_{\unlm, f}(d,n)$. To this end,  we first extend each of the partitions $\lm(x_i)$, $\lm(y_j)$, $(f^d)$, and $(f^{nd})$ by adding zero parts so that their length is at most $\sum_{i=1}^m \ell(\lm(x_i))+\sum_{j=1}^\ell \ell(\lm(y_j))+nd$. Using $(\ref{Zel-trick-eqns})$, we next construct the partitions $\gamma(1) \subset \gamma(2)$ and $\gamma(3)\subset \gamma(4)$ such that 
\begin{equation}\label{partitions-gamma-from-Zele-trick}
c^\mu_{\lm(x_1),\ldots, \lm(x_m), \underbrace{(f^d),\ldots, (f^d)}_{n \text{ times}}}=c^{\gamma(2)}_{\gamma(1),\mu} \text{~and~} c^\mu_{\lm(y_1),\ldots, \lm(\lm(y_\ell),(f^{nd})}=c^{\gamma(4)}_{\mu,\gamma(3)}.
\end{equation}
Note that $\gamma(1), \gamma(2), \gamma(3)$, and $\gamma(4)$ have at most $N$ parts where 
$$
N:=(m+n+l+1)\left( \sum_{i=1}^m \ell(\lm(x_i))+\sum_{j=1}^\ell \ell(\lm(y_j))+nd \right).
$$

It now follows from Proposition \ref{Zelevinsky-trick} that 
\[
K_{\unlm, f}(d,n) = \sum_{\substack{\mu \\ \ell(\mu) \leq nd}} c^{\gamma(2)}_{\gamma(1),\mu} \cdot c^{\gamma(4)}_{\mu,\gamma(3)}.
\]

Let us now consider the polytope obtained by gluing two hive polytopes as follows:

\begin{equation}\label{our-polytope-pic}
\begin{tikzpicture}
\coordinate (a) at (0,0);
\node[font=\footnotesize] (nu11) at (-.2,.4) {$\gamma_1(1)$};
\coordinate (b) at (.5,.5);
\coordinate (c) at (2.5,2.5);
\node[font=\footnotesize] (nuN1) at (2.3,2.9) {$\gamma_N(1)$};
\coordinate (d) at (3,3);
\coordinate (e) at (4,3);
\node[font=\footnotesize] (nuN3) at (3.5,3.25) {$\gamma_N(3)$};
\coordinate (f) at (8,3); 
\coordinate (g) at (9,3);
\node[font=\footnotesize] (nu13) at (8.5,3.25) {$\gamma_1(3)$};
\coordinate (h) at (8.5,2.5);
\node[font=\footnotesize] (nu14) at (9.2,2.6) {$\gamma_1(4)$};
\coordinate (i) at (6.5,.5);
\coordinate (j) at (6,0);
\node[font=\footnotesize] (nuN4) at (6.7,.1) {$\gamma_N(4)$};
\coordinate (k) at (5,0);
\node[font=\footnotesize] (nuN2) at (5.5,-.25) {$\gamma_N(2)$};
\coordinate (l) at (1,0);
\node[font=\footnotesize] (nu12) at (.5,-.25) {$\gamma_1(2)$};
\draw[black,thick] (a)--(b)--(c)--(d)--(e)--(f)--(g)--(h)--(i)--(j)--(k)--(l)--cycle;

\foreach \i in {a,b,c,d,e,f,g,h,i,j,k,l}
  \fill (\i) circle (2pt);

  \draw[red,dashed] (3,3) -- (6,0);
  \fill[red] (3.5,2.5) circle (2pt);
  \fill[red] (5.5,.5) circle (2pt);
  
\node[font=\footnotesize] (m1) at (3.5,2.75) {$\textcolor{red}{\mu_1}$};
\node[font=\footnotesize] (mN) at (6,.35) {$\textcolor{red}{\mu_N}$};

   \coordinate (c1) at (3,1);
    \draw (c1) node[above] {} node {$\textcolor{dartmouthgreen}{c^{\gamma(2)}_{\gamma(1),\mu}}$};
  \coordinate (c2) at (6,2);
     \draw (c2) node[above] {} node {$\textcolor{dartmouthgreen}{c^{\gamma(4)}_{\mu,\gamma(3)}}$};

\draw[decorate sep={.3mm}{3mm},fill] (1.5,-.2) -- (4.5,-.2);
\draw[decorate sep={.3mm}{3mm},fill] (.7,1) -- (2.2,2.5);
\draw[decorate sep={.3mm}{3mm},fill] (4.5,3.2) -- (7.5,3.2);
\draw[decorate sep={.3mm}{3mm},fill] (7,.75) -- (8.5,2.3);
\draw[red, decorate sep={.3mm}{3mm},fill] (4,2.3) -- (5.3,1);

\end{tikzpicture}
\end{equation}

\noindent
Specifically, we define $\p_{\unlm, f}(d,n)$ to be the polytope consisting of all tuples of non-negative numbers $(x_{i,j}, y_{i,j}, t_{i,j}, \widetilde{x}_{i,j}, \widetilde{y}_{i,j}, \widetilde{t}_{i,j})$ such that 
\begin{enumerate}
\item $x_{i,0} = \gamma_{i+1}(1), \; t_{0,k} = \gamma_{k+1}(2), \; \forall i,k \in \{0,\ldots, N-1\}$;

\item $y_{j,N-1-j} = \widetilde{y}_{j,N-1-j}, \; \forall j \in \{0,\ldots, N-1\}$;

\item $y_{nd+j, N-1-(nd+j)}=0$, $\forall j \in [N-nd]$;

\item $\widetilde{x}_{i,0} = \gamma_{i+1}(3), \; \widetilde{t}_{0,k} = \gamma_{k+1}(4), \; \forall i,k \in \{0,\ldots, N-1\}$;
\item $\sum_{j=0}^{N-1} y_{j, N-1-j} = |\gamma(2)| - |\gamma(1)| = |\gamma(4)|-|\gamma(3)|$;
\item $(x_{i,j},y_{i,j},t_{i,j})$ and $(\widetilde{x}_{i,j}, \widetilde{y}_{i,j}, \widetilde{t}_{i,j})$ are $N$-hives.
\end{enumerate}

\begin{comment}
\begin{figure}[tbp] 
\hypertarget{tildepartitions}{}
\begin{center}
\includegraphics[trim=2cm 16cm 2cm 2cm,clip,scale=.9]{tildepartitions}
\caption{The partitions $\gamma(1),\gamma(2),\gamma(3),$ and $\gamma(4)$ obtained from Zelevinsky's trick. The red vertical lines are used to emphasize that the partitions $\lambda(1),\ldots, \lambda(n), \nu(1),\ldots, \nu(\ell)$ have parts equal to zero adjoined to the end of them so that all partitions have the same number of parts (including zeros).}
\end{center}
\end{figure} 
\end{comment}

\smallskip
\noindent
It follows from Theorem \ref{LR-hives-KnuTao-thm} that the number of lattice points of $\p_{\unlm, f}(d,n)$ is 
\[
\sum_{\mu} c^{\gamma(2)}_{\gamma(1),\mu} \cdot c^{\gamma(4)}_{\mu,\gamma(3)},
\]
where the sum is over all partitions $\mu$ with $\ell(\mu) \leq N$ whose last $N-nd$ parts are zero. Thus, we get that

\begin{equation}\label{lattice-points-poly-eqn-1}
K_{\unlm, f}(d, n)=\text{~the number of lattice points of~}\p_{\unlm, f}(d,n).
\end{equation}

\begin{rmk} \label{comb-linear-program-rmk} The linear inequalities defining $\p_{\unlm, f}(d,n)$ can be written in the form of an  integer linear program
$$
A\cdot \mathbf{x} \leq \mathbf{b},
$$
where the entries of $A$ are $0$, $1$, and $-1$, and the  entries of $\mathbf{b}$ are homogeneous linear integral forms in the parts of the partitions $\lm(x_i)$, $\lm(y_j)$, and $f$. This is a combinatorial linear program in the sense of Tardos \cite{Tar86}.
\end{rmk}

\subsection{The polytope $\p_{\unlm}$ from Theorem \ref{main-thm}}\label{our-polytope-sec} Let $\Q$ be the $n$-complete bipartite quiver with source vertices $x_1, \ldots, x_m$, and sink vertices $y_1, \ldots, y_\ell$. Let $\beta$ be a sincere dimension vector of $\Q$ and let $(\Q_\beta, \widetilde{\beta})$ be the flag-extension of $(\Q, \beta)$.

\begin{definition}[\textbf{The polytope $\p_{\unlm}$}] \label{defn-our-polytope}
Let $\unlm=(\lm(x_i), -\lm(y_j))_{i \in [m], j \in [\ell] }$ be a tuple of weakly decreasing sequences with $\lm(x_i)$ a partition of length at most $\beta(x_i)$ and $\lm(y_j)$ a partition of length at most $\beta(y_j)$ such that
$$
\sum_{i=1}^m |\lambda(x_i)|=\sum_{j=1}^{\ell} |\lambda(y_j)|.
$$

We define
\begin{equation}
\p_{\unlm}:=\p_{\unlm, f}(d,n),
\end{equation}
where 
$$
f:=\sum_{i \in [m]} \lambda_1(x_i) \text{~and~} d:=\sum_{i \in [m]} \beta(x_i). 
$$

\end{definition}

As a direct consequence of Theorem \ref{main-formula-mult-thm} and the Saturation Property of Derksen and Weyman, we obtain the following polytopal description of the multiplicities $K^{\beta}_{\unlm}$. 

\begin{prop} \label{prop-main-thm-part-1} Keep the same notations as above. Then
\begin{equation}\label{lattice-points-our-polytope-eqn2}
K^{\beta}_{\unlm}=\sum_{\substack{\mu \\ \ell(\mu) \leq nd}} c^{\gamma(2)}_{\gamma(1),\mu} \cdot c^{\gamma(4)}_{\mu,\gamma(3)}=\text{~the number of lattice points of~}\p_{\unlm},
\end{equation}
where $\gamma(1), \gamma(2), \gamma(3), \gamma(4)$ are obtained from $\unlm$ via $(\ref{partitions-gamma-from-Zele-trick})$. Furthermore,
\begin{equation}\label{our-mult-non-empty-polytope-eqn}
K^{\beta}_{\unlm} \neq 0 \Longleftrightarrow \p_{\unlm} \neq \emptyset.
\end{equation}
\end{prop}

\begin{proof} The first part, formula $(\ref{lattice-points-our-polytope-eqn2})$, follows at once from Theorem \ref{main-formula-mult-thm} and $(\ref{lattice-points-poly-eqn-1})$.

When it comes to $(\ref{our-mult-non-empty-polytope-eqn})$, the implication ``$\Longrightarrow$" is obvious. For the other implication,  assume that $\p_{\unlm} \neq \emptyset$ and let $\mathbf{v}$ be one of its vertices. Then $\mathbf{v}$ must have rational coefficients, and therefore $r\cdot \mathbf{v}$ is a lattice point of $\p_{r \unlm}$ for some positive integer $r$, and thus $K^{\beta}_{r \unlm} \neq 0$ by $(\ref{lattice-points-our-polytope-eqn2})$. But $K^{\beta}_{r \unlm}$ can also be expressed via $(\ref{eqn-mult-wt-semi-inv})$ as the dimension of a weight spaces of quiver semi-invariants  of the form $\dim \SI(\Q_\beta, \widetilde{\beta})_{r \widetilde{\sigma}_{\unlm}}$. It now follows from the Saturation Property stated in Theorem \ref{King-criterion} that $ K^{\beta}_{\unlm}$, which can be expressed as $\dim \SI(\Q_\beta, \widetilde{\beta})_{\widetilde{\sigma}_{\unlm}}$, is also non-zero. 
\end{proof}

\section{Moment cones for quivers and the proof of Theorem \ref{main-thm}}\label{moment-cones-proof-thm-1-sec}
Let $Q=(Q_0,Q_1,t,h)$ be a connected acyclic quiver and $\beta \in \ZZ_{>0}^{Q_0}$ be a sincere dimension vector of $Q$. If $U(\beta(x))$ is the group of $\beta(x) \times \beta(x)$ unitary matrices for every $x \in Q_0$, then 
\[
U(\beta) := \prod_{x \in Q_0} U(\beta(x))
\]
is a maximal compact subgroup of $\GL(\beta)$. The conjugation action of $U(\beta)$ on $\rep(Q,\beta)$ is Hamiltonian with the moment map given by 
\begin{align*}
\phi: & \rep(Q,\beta) \to \Herm(\beta)\\
& W \mapsto \phi(W) := \left( \sum_{\substack{a \in Q_1 \\ ta=x}} W(a)^* \cdot W(a) - \sum_{\substack{a \in Q_1 \\ ha=x}} W(a) \cdot W(a)^*\right)_{x \in Q_0}
\end{align*}
where $\Herm(\beta) := \prod_{x \in Q_0} \Herm(\beta(x))$ with $\Herm(\beta(x))$ being the space of $\beta(x) \times \beta(x)$ Hermitian matrices for every $x \in Q_0$ and  $W(a)^*$ denotes the adjoint of the complex matrix $W(a)$, \emph{i.e.}, $W(a)^*$ is the transpose of the conjugate of $W(a)$. The moment cone corresponding to this moment map is $\Delta(Q,\beta)$, which is a rational convex polyhedral cone (see \cite[Theorem 4.9]{Sj}) and  can be viewed as the cone over the moment polytope of the projectivization of $\rep(Q,\beta)$ (see \cite[Corollary 4.11]{Sj}). A more in-depth description of $\Delta(W, \beta)$ can be found in \cite{Bal-Ver-Wal-2023}. Nonetheless, this description does not provide a strongly polynomial time algorithm for testing membership in $\Delta(Q, \beta)$.

If $\lambda=(\lambda_1, \ldots, \lambda_N)$ is a weakly decreasing sequence of real numbers, then the weakly decreasing sequence $(-\lambda_N, \ldots, -\lambda_1)$ will be denoted by $-\lambda$. 

\begin{example}\label{Kly-cone-ex} Let $Q=\bullet \rightarrow \bullet \leftarrow \bullet$ and $\beta =(r,r,r)$. Then $\Delta(Q, \beta)$ consists of all triples $(\lambda(1), \lambda(2), -\lambda(3))$ with each $\lambda(i)$ a weakly decreasing sequence of $r$ (non-negative) real numbers for which there are positive semi-definite $r \times r$ Hermitian matrices $H(1), H(2)$, and $ H(3)$ with spectra $\lambda(1), \lambda(2)$,  and $\lambda(3)$, respectively, and $H(3)=H(1)+H(2)$.

Recall that the Klyachko cone, denoted by $\K(r)$, consists of all triples $(\lambda(1), \lambda(2), \lambda(3))$ of weakly decreasing sequences of $r$ real numbers for which there are $r \times r$ Hermitian matrices $H(1), H(2)$, and $ H(3)$ with spectra $\lambda(1), \lambda(2)$,  and $\lambda(3)$, respectively, and $H(3)=H(1)+H(2)$.

Now, let $(\lambda(1), \lambda(2), \lambda(3))$ be a triple of weakly decreasing sequences of $r$ real numbers, and consider the following sequences of non-negative real numbers:
\begin{align*}
&\widetilde{\lambda}(1):=(\lambda_1(1)-\lambda_r(1), \ldots, \lambda_{r-1}(1)-\lambda_r(1), 0),\\
& \widetilde{\lambda}(2):=(\lambda_1(2)-\lambda_r(2), \ldots, \lambda_{r-1}(2)-\lambda_r(2), 0), \\
&\widetilde{\lambda}(3):=(\lambda_1(3)-(\lambda_r(1)+\lambda_r(2)), \ldots, \lambda_r(3)-(\lambda_r(1)+\lambda_r(2))).
\end{align*}

It is now immediate to see that
$$
(\lambda(1), \lambda(2), \lambda(3)) \in \K(r) \Longleftrightarrow (\widetilde{\lambda}(1), \widetilde{\lambda}(2),-\widetilde{\lambda}(3)) \in \Delta(Q, \beta).
$$
\end{example}

We next explain how to view $\Delta(Q,\beta)$ as the cone of effective weights associated to a different quiver. While this result holds for general quivers (see \cite{Ber-Rei-2022}), we will focus in what follows on bipartite quivers since this suffices for our purposes. 

Assume that $Q$ is a bipartite quiver (not necessarily $n$-complete) with source vertices $x_1, \ldots, x_m$, and sink vertices $y_1, \ldots, y_\ell$, and let $(Q_\beta, \widetilde{\beta})$ be its flag-extension. (Note that we orient our flags slightly differently than in \cite{Ber-Rei-2022}.) 

Let $\underline{\lambda} = (\lambda(x_i), -\lambda(y_j))_{i \in [m], j \in [\ell]}$ be a tuple of sequences with $\lambda(x)$ a weakly decreasing sequence of $\beta(x)$ real numbers for every vertex $x \in Q_0$.  

\begin{remark} \label{spectra-delta-rmk} It is immediate to see that $\underline{\lambda}$ belongs to $\Delta(Q, \beta)$ if and only if there is a representation $W \in \rep(Q, \beta)$ such that 
\begin{enumerate}
\item the spectrum of the Hermitian matrix $
\sum_{\substack{a \in Q_1 \\ ta=x_i}} W(a)^* \cdot W(a)$ is $\lambda(x_i)$ for every $i \in [m]$;
\smallskip

\item the spectrum of the Hermitian matrix $\sum_{\substack{a \in Q_1 \\ ha=y_j}} W(a) \cdot W(a)^*$ is $\lambda(y_j)$ for every $j \in [\ell]$.
\end{enumerate}

\noindent
This shows that a necessary condition for $\underline{\lambda}$ to belong to $\Delta(Q, \beta)$ is that $\lambda(x_i)$, $i \in [m]$, and $\lambda(y_j)$, $j \in [\ell]$, are non-negative sequences.
\end{remark}

Now let $\widetilde{\sigma}_{\underline{\lambda}} \in \RR^{(Q_\beta)_0}$ be the real weight defined as follows: If $x$ is a source vertex of $Q$, the values of $\widetilde{\sigma}_{\underline{\lambda}}$ along the $\beta(x)$ vertices of the flag 
\[
\F(x) :  \bullet \rightarrow \bullet \rightarrow \cdots \bullet \rightarrow \hspace{-6pt} \begin{array}{c}\\ \bullet \\ x \end{array}
\]
are 
$$
\lambda_1(x) - \lambda_2(x), \, \ldots, \, \lambda_{\beta(x)-1}(x) - \lambda_{\beta(x)}(x), \, \lambda_{\beta(x)}(x).
$$

If, instead, $y$ is a sink vertex of $Q$, the values of $\widetilde{\sigma}_{\underline{\lambda}}$ along the $\beta(y)$ vertices of the flag
\[
\F(x) : \begin{array}{c}\\ \bullet \\ y \end{array} \hspace{-6pt} \rightarrow \bullet \rightarrow \cdots \bullet \rightarrow \bullet
\]
are 
$$
-\lambda_{\beta(y)}(y), \,  \lambda_{\beta(y)}(y) - \lambda_{\beta(y)-1}(y), \, \ldots, \, \lambda_2(y) - \lambda_1(y).
$$

\smallskip
\begin{prop}[compare to \cite{Ber-Rei-2022}] \label{moment-cone-effecive-cone-prop} Let $Q$ be a bipartite quiver and $\beta$ a sincere dimension vector of $Q$. Let $T$ be the function from the set of all tuples $\underline{\lambda} = (\lambda(x_i), -\lambda(y_j))_{i \in [m], j \in [\ell]}$ as above to $\RR^{(Q_\beta)_0}$ defined by $T(\lambda) = \widetilde{\sigma}_{\underline{\lambda}}$. Then
$$
T(\Delta(Q, \beta))=\Eff(Q_\beta, \widetilde{\beta}),
$$
and $T$ is an isomorphism of rational convex polyhedral cones.
\end{prop}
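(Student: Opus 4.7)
The plan is to reduce both sides of the claimed equality to the Hermitian moment-map criterion of Theorem~\ref{King-criterion}(4) and then to match them via a spectral calculation along each flag of $Q_\beta$.

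First I observe that $T$ is linear and injective on its natural parameter set: from any weight $\widetilde{\sigma} = T(\underline{\lambda})$ one recovers the entries of each $\lambda(x)$ by telescoping the values of $\widetilde{\sigma}$ along $\F(x)$ together with the terminal value at $x$ itself, using $(\ref{wt-partition-eqn1})$ and $(\ref{wt-partition-eqn2})$. Moreover, if $\widetilde{\sigma} \in \Eff(Q_\beta, \widetilde{\beta})$, Lemma~\ref{eff-wts-cone-flags-lemma} gives $\widetilde{\sigma}|_{\F(x_i)} \geq 0$ and $\widetilde{\sigma}|_{\F(y_j)} \leq 0$, so the telescoping produces weakly decreasing sequences of non-negative real numbers, exactly as required by the definition of $\Delta(Q, \beta)$ and Remark~\ref{spectra-delta-rmk}. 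Thus $T$ already yields a linear bijection between the two obvious ambient parameter spaces, and the remaining task is to show that it restricts to a bijection between $\Delta(Q, \beta)$ and $\Eff(Q_\beta, \widetilde{\beta})$.

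For the inclusion $T(\Delta(Q, \beta)) \subseteq \Eff(Q_\beta, \widetilde{\beta})$, I start from $\underline{\lambda} \in \Delta(Q, \beta)$ with witness $W \in \rep(Q, \beta)$ and extend $W$ to $\widetilde{W} \in \rep(Q_\beta, \widetilde{\beta})$ whose moment map equals $\widetilde{\sigma}_{\underline{\lambda}}(z) \cdot \Id$ at every vertex $z$. At each source $x_i$, I pick an orthonormal basis of $V(x_i)$ diagonalizing $H := \sum_{ta = x_i} W(a)^* W(a)$ with eigenvalues $\lambda_1(x_i) \geq \cdots \geq \lambda_{\beta(x_i)}(x_i)$. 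Working backwards along the flag $\F(x_i)$, I define the last flag arrow $a$ into $x_i$ by requiring $\widetilde{W}(a)\widetilde{W}(a)^* = H - \lambda_{\beta(x_i)}(x_i) \cdot \Id$, which is positive semidefinite of rank at most $\beta(x_i) - 1$, and pick any matrix factorization. Continuing inductively down the flag, each next arrow $b$ is chosen so that $\widetilde{W}(b)\widetilde{W}(b)^* = \widetilde{W}(b')^*\widetilde{W}(b') - \widetilde{\sigma}_{\underline{\lambda}}(v) \cdot \Id$, where $b'$ is the previously constructed arrow and $v$ is the intermediate flag vertex; the operator remains positive semidefinite by a routine telescoping computation. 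An analogous construction at each sink (with $W(a)W(a)^*$ in place of $W(a)^*W(a)$) yields the full $\widetilde{W}$, and Theorem~\ref{King-criterion}(4) then produces $\widetilde{\sigma}_{\underline{\lambda}} \in \Eff(Q_\beta, \widetilde{\beta})$.

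For the reverse inclusion, given $\widetilde{\sigma} \in \Eff(Q_\beta, \widetilde{\beta})$ Theorem~\ref{King-criterion}(4) furnishes $\widetilde{W} \in \rep(Q_\beta, \widetilde{\beta})$ whose moment map is $\widetilde{\sigma}(z) \cdot \Id$ at every vertex. Let $W$ be the restriction of $\widetilde{W}$ to the arrows of $Q$. Walking up each flag from the dim-$1$ vertex and repeatedly using the identity $\widetilde{W}(\cdot)^*\widetilde{W}(\cdot) = \widetilde{W}(\cdot)\widetilde{W}(\cdot)^* + \widetilde{\sigma}(\cdot) \cdot \Id$ on the previous arrow, I show by induction that the non-zero eigenvalues of the last flag operator $\widetilde{W}(a)\widetilde{W}(a)^*$ are exactly the partial sums of the flag values of $\widetilde{\sigma}$ up to that point; the moment-map equation at the source $x_i$ in $Q_\beta$ then pins down the spectrum of $\sum_{ta = x_i} W(a)^* W(a)$ as $\lambda(x_i)$, and analogously at each sink. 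Hence $\underline{\lambda} \in \Delta(Q, \beta)$. Since both $T$ and its inverse are given by integer linear formulas in the relevant coordinates, $T$ is an isomorphism of rational convex polyhedral cones. The main obstacle I foresee is the telescoping spectral bookkeeping: one must keep careful track of how the eigenvalues of $\widetilde{W}(a)^*\widetilde{W}(a)$ and $\widetilde{W}(a)\widetilde{W}(a)^*$ propagate along the flag, use the fact that these two operators share the same non-zero spectrum, and verify that positive semidefiniteness persists at each inductive step so that a factorization of the prescribed shape actually exists.
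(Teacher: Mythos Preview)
Your approach is essentially the same as the paper's: both reduce to the moment-map criterion of Theorem~\ref{King-criterion}(4) together with a spectral telescoping along each flag (the paper packages the latter as Lemma~\ref{lemma:moment-matrices}, which is exactly the computation you carry out by hand in both directions).

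There is one point to tighten. Theorem~\ref{King-criterion} is stated only for integral weights $\sigma \in \ZZ^{Q_0}$, so your invocation of part~(4) to produce a witness $\widetilde{W}$ from an arbitrary real $\widetilde{\sigma} \in \Eff(Q_\beta,\widetilde{\beta})$ in the reverse inclusion is not strictly licensed by the cited result. The paper sidesteps this by first establishing the equality on lattice points, $T(\Delta(Q,\beta)) \cap \ZZ^{(Q_\beta)_0} = \Eff(Q_\beta,\widetilde{\beta}) \cap \ZZ^{(Q_\beta)_0}$, and then concluding from the fact that both cones are rational polyhedral; you should either do the same or cite a real-weight version of the Kempf--Ness correspondence.
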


To prove this result, we require the following very useful lemma.

\begin{lemma}[see {\cite[Sec. 3.4]{CB}}]
\label{lemma:moment-matrices}
Let $\sigma(1), \ldots, \sigma(N-1)$ be non-negative real numbers. Then the following are equivalent:
			\begin{enumerate}[(a)]
				\item There exist matrices $W_i \in \CC^{(i+1)\times i}$, $1 \leq i \leq N-1$, such that 
				\begin{align*}
				W_i^* \cdot W_i - W_{i-1} \cdot W_{i-1}^* &= \sigma(i) \cdot \Id_{\CC^i} \; \text{ for } 2 \leq i \leq N-1,\\
				W_1^* \cdot W_1 &=\sigma(1).
				\end{align*}
				\item There exists an $N \times N$ Hermitian matrix $H$ $\left(= W_{N-1} \cdot W_{N-1}^* \right)$ with eigenvalues 
				\[
				\gamma(i) = \sum_{i \leq j \leq N-1} \sigma(j), \; \forall \; 1 \leq i \leq N-1,
				\]
				and $\gamma(N) = 0$.
			\end{enumerate}
\end{lemma}

We are now ready to prove Proposition \ref{moment-cone-effecive-cone-prop}.
 
\begin{proof}[Proof of Proposition \ref{moment-cone-effecive-cone-prop}]
Let $\underline{\lambda} = (\lambda(x_i), -\lambda(y_j))_{i \in [m], j \in [\ell]}$ be a tuple of sequences with $\lambda(x)$ a weakly decreasing sequence of $\beta(x)$ real numbers for every vertex $x$ of $Q$. From Remark \ref{spectra-delta-rmk} and Lemma \ref{lemma:moment-matrices} we obtain that $\underline{\lambda} \in \Delta(Q, \beta)$ if and only if there exists $\widetilde{W} \in \rep(Q_\beta, \widetilde{\beta})$ such that 
	\[
	\sum_{\substack{a \in Q_1 \\ ta=x}} \widetilde{W}(a)^* \cdot \widetilde{W}(a) - \sum_{\substack{a \in Q_1 \\ ha=x}} \widetilde{W}(a) \cdot \widetilde{W}(a)^* = \widetilde{\sigma}_{\underline{\lambda}}(x) \cdot \Id_{\widetilde{\beta}(x)} \; \forall x \in (Q_\beta)_0.
	\]

It now follows from Theorem \ref{King-criterion} that $T\left(\Delta(Q,\beta) \cap \ZZ^{(Q_\beta)_0}\right) \subseteq \Eff(Q_\beta, \widetilde{\beta}) \cap \ZZ^{(Q_\beta)_0}$. To prove the other inclusion, let $\widetilde{\sigma} \in \Eff(Q_\beta,\widetilde{\beta})$ be any effective weight. Then $\widetilde{\sigma}$ is  non-negative/non-positive along the vertices of the flag $\F(x)$ if $x$ is a source/sink of $Q$ by Lemma \ref{eff-wts-cone-flags-lemma}. For any such $\widetilde{\sigma}$, consider the partitions
\[
\lambda_{\widetilde{\sigma}}(x) : = \begin{cases} 
\left( \dsp \sum_{i \leq j \leq \beta(x)} \widetilde{\sigma}(j) \right)_{i \in [\beta(x)]} & \mbox{ if } x \text{ is a source} \\
\\
\left( \dsp -\sum_{i \leq j \leq \beta(x)} \widetilde{\sigma}(j)\right)_{i \in [\beta(x)]} & \mbox{ if } x \text{ is a sink},
\end{cases}
\]
where $\widetilde{\sigma}(k)$ denotes the value of $\widetilde{\sigma}$ at the $k^{th}$ vertex of the flag $\F(x)$ as we traverse the flag from left/right to right/left for any source/sink vertex $x \in Q_0$ and $k \in [\beta(x)]$. Then, using Lemma \ref{lemma:moment-matrices} once again, we get that $\underline{\lambda}_{\widetilde{\sigma}}:=(\lambda_{\widetilde{\sigma}}(x_i), -\lambda_{\widetilde{\sigma}}(y_j))_{i \in [m], j\in [\ell]}$ belongs to $\Delta(Q, \beta)$ and
$$
T(\underline{\lambda}_{\widetilde{\sigma}}) = \widetilde{\sigma}.
$$
This shows that 
$T(\Delta(Q,\beta)) \cap \ZZ^{(Q_\beta)_0} = \Eff(Q_\beta, \widetilde{\beta}) \cap \ZZ^{(Q_\beta)_0}$, which implies the claim of the proposition since $\Delta(Q,\beta)$ and $\Eff(Q_\beta,\widetilde{\beta})$ are both rational convex polyhedral cones.
\end{proof}

Finally, we are ready to prove our main result.

\begin{proof}[\textbf{Proof of Theorem \ref{main-thm}}] $(1)$ This part is proved in Proposition \ref{prop-main-thm-part-1}.

\smallskip
\noindent
$(2)$ Let $\underline{\lambda}=(\lambda(x_i), -\lambda(y_j))_{i \in [m], j \in [\ell]}$ be a tuple of sequences with $\lambda(x)$ a weakly decreasing sequence of $\beta(x)$ integers for every $x \in \Q_0$.

We assume that the $\lambda(x_i)$ and $\lambda(y_j)$ are partitions since otherwise we know that $\underline{\lambda} \notin \Delta(\Q, \beta)$ by Remark \ref{spectra-delta-rmk}. It now follows from part (1) and Propositions \ref{moment-cone-effecive-cone-prop} and \ref{prop-main-thm-part-1} that
$$
\underline{\lambda} \in \Delta(\Q, \beta) \Longleftrightarrow K^{\beta}_{\unlm} \neq 0 \Longleftrightarrow \p_{\unlm} \neq \emptyset.
$$

Since $\p_{\unlm}$ can be described as a combinatorial linear program (see Remark \ref{comb-linear-program-rmk}), deciding whether $\underline{\lambda}$ belongs to $\Delta(\Q, \beta)$ can be done in strongly polynomial time using Tardos' \cite{Tar86} combinatorial linear programming algorithm.
\end{proof}

\begin{rmk}\label{rmk-non-natural-input-form} Let $\beta=(\beta(x))_{x \in \Q_0 } \in \ZZ_{>0}^{\Q_0}$ be a sincere dimension vector and let 
\[
\sigma=(\sigma(x_i), \sigma(y_j))_{i \in [m], j \in [\ell]} \in \ZZ^{\Q_0}
\]
be an integral stability weight for $\Q$ with $\sigma(x_i) \geq 0$, $\forall i \in [m]$, and $\sigma(y_j) \leq 0$, $\forall j \in [\ell]$. 

Set $\lambda(x_i):=(\underbrace{\sigma(x_i), \ldots, \sigma(x_i))}_{\beta(x_i)}$ for every $i \in [m]$, and $\lambda(y_j):=(\underbrace{-\sigma(y_j), \ldots, -\sigma(y_j))}_{\beta(y_j)}$ for every $j \in [\ell]$, and let 
$$
\underline{\lambda}_{\sigma}=(\lambda(x_i), -\lambda(y_j))_{i \in [m], j \in [\ell]}.
$$
Then it follows from Theorem \ref{King-criterion} that
$$
\sigma \in \Eff(\Q, \beta) \Longleftrightarrow \underline{\lambda}_{\sigma} \in \Delta(\Q, \beta).
$$
Thus, if the input in Problem \ref{gen-semi-stab-problem}  is specified as $\underline{\lambda}_{\sigma}$, then Theorem \ref{main-thm} implies a strongly polynomial time algorithm for the generic semi-stability problem.  
\end{rmk}

\subsection*{Acknowledgment} The authors would like to thank Harm Derksen, Cole Franks, and Visu Makam for their helpful conversations on the subject of the paper. We are also grateful to Mich{\`e}le Vergne and Michael Walter for their enlightening discussions on the paper. Additionally, we extend our thanks to the anonymous referee for their valuable comments and suggestions, which significantly improved the exposition of this work.

C. Chindris is supported by Simons Foundation grant $\# 711639$.

\end{document}